\documentclass[11pt,leqno]{article}
\usepackage[margin=1in]{geometry} 
\usepackage{amssymb,amsfonts,amsmath,bbm,mathrsfs,stmaryrd,mathtools}
\usepackage{xcolor}
\usepackage{url}

\usepackage{extarrows}

\usepackage[shortlabels]{enumitem}
\usepackage{tensor}

\usepackage{xr}
\usepackage[T1]{fontenc}

\usepackage[colorlinks,
linkcolor=black!75!red,
citecolor=blue,
pdftitle={},
pdfproducer={pdfLaTeX},
pdfpagemode=None,
bookmarksopen=true,
bookmarksnumbered=true,
backref=page]{hyperref}

\usepackage{tikz}
\usetikzlibrary{arrows,calc,decorations.pathreplacing,decorations.markings,decorations.shapes,intersections,shapes.geometric,through,fit,shapes.symbols,positioning,decorations.pathmorphing}

\usepackage{braket}

\usepackage[amsmath,thmmarks,hyperref]{ntheorem}
\usepackage{cleveref}

\newcommand\nthalias[1]{\AddToHook{env/#1/begin}{\crefalias{lemma}{#1}}}

\nthalias{definition}
\nthalias{example}
\nthalias{examples}
\nthalias{remark}
\nthalias{remarks}
\nthalias{convention}
\nthalias{notation}
\nthalias{construction}
\nthalias{sketch}
\nthalias{theoremN}
\nthalias{propositionN}
\nthalias{corollaryN}
\nthalias{lemma}
\nthalias{proposition}
\nthalias{corollary}
\nthalias{theorem}
\nthalias{conjecture}
\nthalias{question}
\nthalias{assumption}
\nthalias{recollection}

\creflabelformat{enumi}{#2#1#3}

\crefname{section}{Section}{Sections}
\crefformat{section}{#2Section~#1#3} 
\Crefformat{section}{#2Section~#1#3} 

\crefname{subsection}{\S}{\S\S}
\AtBeginDocument{%
  \crefformat{subsection}{#2\S#1#3}%
  \Crefformat{subsection}{#2\S#1#3}%
}

\crefname{subsubsection}{\S}{\S\S}
\AtBeginDocument{%
  \crefformat{subsubsection}{#2\S#1#3}%
  \Crefformat{subsubsection}{#2\S#1#3}%
}

\theoremstyle{plain}

\newtheorem{lemma}{Lemma}[section]

\newtheorem{corollary}[lemma]{Corollary}
\newtheorem{theorem}[lemma]{Theorem}

\theoremstyle{plain}
\theoremnumbering{Alph}
\newtheorem{theoremN}{Theorem}

\theoremstyle{plain}
\theorembodyfont{\upshape}
\theoremsymbol{\ensuremath{\blacklozenge}}

\newtheorem{definition}[lemma]{Definition}
\newtheorem{example}[lemma]{Example}

\newtheorem{remark}[lemma]{Remark}
\newtheorem{remarks}[lemma]{Remarks}

\crefname{definition}{definition}{definitions}
\crefformat{definition}{#2definition~#1#3} 
\Crefformat{definition}{#2Definition~#1#3} 

\crefname{ex}{example}{examples}
\crefformat{example}{#2example~#1#3} 
\Crefformat{example}{#2Example~#1#3} 

\crefname{exs}{example}{examples}
\crefformat{examples}{#2example~#1#3} 
\Crefformat{examples}{#2Example~#1#3} 

\crefname{remark}{remark}{remarks}
\crefformat{remark}{#2remark~#1#3} 
\Crefformat{remark}{#2Remark~#1#3} 

\crefname{remarks}{remark}{remarks}
\crefformat{remarks}{#2remark~#1#3} 
\Crefformat{remarks}{#2Remark~#1#3} 

\crefname{convention}{convention}{conventions}
\crefformat{convention}{#2convention~#1#3} 
\Crefformat{convention}{#2Convention~#1#3} 

\crefname{notation}{notation}{notations}
\crefformat{notation}{#2notation~#1#3} 
\Crefformat{notation}{#2Notation~#1#3} 

\crefname{table}{table}{tables}
\crefformat{table}{#2table~#1#3} 
\Crefformat{table}{#2Table~#1#3}

\crefname{lemma}{lemma}{lemmas}
\crefformat{lemma}{#2lemma~#1#3} 
\Crefformat{lemma}{#2Lemma~#1#3} 

\crefname{proposition}{proposition}{propositions}
\crefformat{proposition}{#2proposition~#1#3} 
\Crefformat{proposition}{#2Proposition~#1#3} 

\crefname{propositionN}{proposition}{propositions}
\crefformat{propositionN}{#2proposition~#1#3} 
\Crefformat{propositionN}{#2Proposition~#1#3} 

\crefname{corollary}{corollary}{corollaries}
\crefformat{corollary}{#2corollary~#1#3} 
\Crefformat{corollary}{#2Corollary~#1#3} 

\crefname{corollaryN}{corollary}{corollaries}
\crefformat{corollaryN}{#2corollary~#1#3} 
\Crefformat{corollaryN}{#2Corollary~#1#3} 

\crefname{theorem}{theorem}{theorems}
\crefformat{theorem}{#2theorem~#1#3} 
\Crefformat{theorem}{#2Theorem~#1#3} 

\crefname{theoremN}{theorem}{theorems}
\crefformat{theoremN}{#2theorem~#1#3} 
\Crefformat{theoremN}{#2Theorem~#1#3} 

\crefname{enumi}{}{}
\crefformat{enumi}{#2#1#3}
\Crefformat{enumi}{#2#1#3}

\crefname{assumption}{assumption}{Assumptions}
\crefformat{assumption}{#2assumption~#1#3} 
\Crefformat{assumption}{#2Assumption~#1#3} 

\crefname{construction}{construction}{Constructions}
\crefformat{construction}{#2construction~#1#3} 
\Crefformat{construction}{#2Construction~#1#3} 

\crefname{sketch}{sketch}{Sketches}
\crefformat{sketch}{#2sketch~#1#3} 
\Crefformat{sketch}{#2Sketch~#1#3} 

\crefname{recollection}{recollection}{Recollectiones}
\crefformat{recollection}{#2recollection~#1#3} 
\Crefformat{recollection}{#2Recollection~#1#3} 

\crefname{question}{question}{Questions}
\crefformat{question}{#2question~#1#3} 
\Crefformat{question}{#2Question~#1#3} 

\crefname{equation}{}{}
\crefformat{equation}{(#2#1#3)} 
\Crefformat{equation}{(#2#1#3)}

\numberwithin{equation}{section}

\theoremstyle{nonumberplain}
\theoremsymbol{\ensuremath{\blacksquare}}

\newtheorem{proof}{Proof}
\newcommand\pf[1]{\newtheorem{#1}{Proof of \Cref{#1}}}

\newcommand\bC{{\mathbb C}}

\newcommand\bG{{\mathbb G}}

\newcommand\bS{{\mathbb S}}

\newcommand\bU{{\mathbb U}}

\newcommand\cC{{\mathcal C}}

\newcommand\cF{{\mathcal F}}

\newcommand\cK{{\mathcal K}}
\newcommand\cL{{\mathcal L}}

\newcommand\cO{{\mathcal O}}

\newcommand\cU{{\mathcal U}}

\newcommand\wt{\widetilde}

\DeclareMathOperator{\id}{id}

\DeclareMathOperator{\im}{\mathrm{im}}

\DeclareMathOperator{\spn}{\mathrm{span}}
\DeclareMathOperator{\spec}{\mathrm{spec}}

\newcommand{\cat}[1]{\textsc{#1}}

\newcommand{\qedhere}{\mbox{}\hfill\ensuremath{\blacksquare}}

\newcommand{\xrightarrowdbl}[2][]{%
  \xrightarrow[#1]{#2}\mathrel{\mkern-14mu}\rightarrow
}

\title{Banach manifolds of spectrally small quantum-group representations}
\author{Alexandru Chirvasitu}

\begin{document}

\date{}

\newcommand{\Addresses}{{
  \bigskip
  \footnotesize

  \textsc{Department of Mathematics, University at Buffalo}
  \par\nopagebreak
  \textsc{Buffalo, NY 14260-2900, USA}  
  \par\nopagebreak
  \textit{E-mail address}: \texttt{achirvas@buffalo.edu}


}}

\maketitle

\begin{abstract}
  We prove that finite-spectrum representations of compact quantum groups either in unital $C^*$-algebras $A$ or on Banach spaces $E$ exhibit the same Banach-space-modeled differential-geometric structure as their classical analogues: (a) they are Banach analytic manifolds; (b) locally homogeneous under conjugation by the pertinent Banach Lie group $U(A)$ or $GL(E)$; (c) with orbit maps fibering principally; (d) and hence with said orbit maps admitting local analytic splitting.

  We also identify the finite-spectrum unitary representations as precisely those that are norm-continuous in the appropriate sense when the compact quantum group has at least one classical point, again generalizing the classical parallel present in various forms in work of Kallman, Shtern and the author. 
\end{abstract}

\noindent {\em Key words:
  Banach manifold;
  CQG algebra;
  adjoint action;
  analytic manifold;
  compact quantum group;
  finite spectrum;
  norm-continuous;
  representation
}

\vspace{.5cm}

\noindent{MSC 2020: 46L67; 46L52; 20G42; 16T15; 22D10; 46H15; 46T05; 46T10
}


\section*{Introduction}

Part of the motivation for the present paper situates it in the context of wide-ranging phenomena to the effect that moduli spaces parametrizing representation-theoretic data attached to ``sufficiently semisimple'' objects have rich geometric structure and exhibit strong regularity properties. A sampling of the vast and multifaceted literature will reveal many instances concretizing the general pattern in various ways.
\begin{itemize}[wide]
\item Structuring spaces of projections/unitaries/partial isometries/orthogonal 1-sum projection $n$-tuples (and many other flavors of like objects) in Banach or $C^*$-algebras $B$ as (typically analytic) \emph{Banach manifolds} in the sense of \cite[\S 5.1]{bourb_vars_1-7} fits the mold, as said spaces are often interpretable as morphisms $A\to B$ for well-behaved Banach/$C^*$-algebras $A$. Examples include \cite{MR2048217,MR1239452,MR1051073} to list only a few.

\item More generally (and as just alluded to), spaces of Banach-algebra/$C^*$ morphisms $A\to B$ for amenable or finite-dimensional semisimple $A$ can often also be equipped with Banach-manifold structures and enjoy local homogeneity under the invertible/unitary group of $B$, decomposability as locally trivial fibrations resulting from that action, etc. See for instance \cite[Proposition 2.2]{MR1354040}, \cite[\S 4]{zbMATH07787455}, \cite[\S 8.2]{rnd_amnbl_2002} and their many cited references.

\item In like fashion (and most directly linked to this paper), for compact groups $\bG$ and unital $C^*$-algebras $A$, \cite{mart-proj} equips the spaces $\cat{Rep}_{\sigma}(\bG,A)$ of \emph{cocycle-twisted} $\bG$-representations in $A$ with Banach-manifold structures and studies their right local geometry. \cite[Theorems 3.2 and 3.7]{zbMATH07787455} extend that discussion, focusing this time on spaces of cocycles of a compact group $\bG$ valued in a \emph{Banach Lie group} \cite[\S 6]{upm_ban} $\bU$.
\end{itemize}

One aim, here, is to divorce at least part of the picture painted by the last bullet point of its ``classical baggage'' by extending analogous results to compact \emph{quantum} $\bG$ in the sense of \cite[Definition 2.1]{wor-cqg}, embodied here as their corresponding $C^*$-algebras $\cC(\bG)$ of continuous functions. The narrative runs in parallel in two adjacent but not-quite-coincident threads, with the following summary compressing and paraphrasing \Cref{th:unifmor2amfld,th:fin.spc.ban} for the purposes of the present introduction. $\bG$ is a compact quantum group throughout, with relevant background and terminology unwound subsequently.   

\begin{theoremN}\label{thn:ban.fib.both}
  For compact quantum $\bG$, \emph{finite-spectrum} $\bG$-representations \emph{in} a unital $C^*$-algebra $A$ (or \emph{on} a Banach space $E$)
  \begin{enumerate}[(1),wide]
  \item constitute analytic Banach manifolds;
  \item locally homogeneous under conjugation by the naturally acting group $\bU$: the unitary group $U(A)$ in one case, and $GL(E)$ in the other;
  \item with the resulting orbit maps
    \begin{equation*}
      \bU\ni g
      \xmapsto{\quad}
      \left(g\triangleright x\in \text{orbit of $x$}\right)
    \end{equation*}
    analytic \emph{principal fibrations} \cite[\S 6.2]{bourb_vars_1-7};

  \item so that in particular said orbit maps split locally and analytically: every $x$ has an open neighborhood $\cU\ni x$ supporting an analytic map
    \begin{equation*}
      \cU
      \ni x'
      \xmapsto{\quad}
      u_{x'}
      \in
      \bU
      ,\quad
      u_x=1,\quad \forall\left(x'\in \cU\right)\left(u_{x'}\triangleright x'=x\right).
    \end{equation*}
  \end{enumerate}
  \qedhere
\end{theoremN}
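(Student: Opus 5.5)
The plan is to reduce everything to the finite-dimensional semisimple case, where the manifold theory of morphisms out of finite-dimensional $C^*$-algebras (or semisimple algebras) is standard. A finite-spectrum representation of $\bG$ in $A$ is a unitary $U\in M(\cC(\bG)\otimes A)$ (or an element of $\cC(\bG)\otimes_{\min} A$ in the unital setting) satisfying $(\Delta\otimes\id)U=U_{13}U_{23}$, whose spectral decomposition involves only a finite set $F$ of irreducible classes. For such $U$, the isotypic projections $p_\alpha\in A$, $\alpha\in F$, form a finite orthogonal partition of unity. Equivalently, $U$ amounts to a unital $*$-morphism $\bigoplus_{\alpha\in F}M_{n_\alpha}\to A$ (built from matrix units of the multiplicity spaces), coupled with the fixed matrices $u^\alpha$. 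More precisely, I would set up a bijection between $\cat{Rep}_F(\bG,A)$ and unital $*$-morphisms $\cat{Hom}(\cO_F^*,A)$, where $\cO_F^*\cong\prod_{\alpha\in F}M_{n_\alpha}$ is the finite-dimensional $C^*$-algebra dual to the coalgebra spanned by the coefficients of the $u^\alpha$. The bijection sends $\varphi$ to $U_\varphi=\sum_\alpha (\id\otimes\varphi)(u^\alpha\otimes\text{matrix units})$. It is $U(A)$-equivariant and is a homeomorphism for the norm topologies, since both directions are given by finite sums of bounded bilinear expressions. In the Banach-space case the same construction replaces $*$-morphisms by unital algebra morphisms $\cO_F^*\to B(E)$, with $GL(E)$ acting.

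Second, I would invoke the known structure of $\cat{Hom}(D,B)$ for finite-dimensional semisimple $D$. Because $D$ is separable (it has a separability idempotent $e$, averaging over the compact unitary group of $D$), any two morphisms $\varphi,\psi$ with $\|\varphi-\psi\|$ small are conjugate via the explicit element
\begin{equation*}
  g_{\varphi,\psi}=\sum_i \psi(e_i')\varphi(e_i'')\in B,\quad e=\sum_i e_i'\otimes e_i''.
\end{equation*}
This element satisfies $\psi(d)g=g\varphi(d)$, and it is invertible when close to $1$. Its dependence on $\psi$ is affine, hence analytic, and in the $C^*$ case I would replace $g$ by its unitary part $g|g|^{-1}$, which remains analytic by holomorphic functional calculus. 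This gives (4) directly on the neighbourhood $\cU$ of $\varphi$. The orbit of $\varphi$ is then locally the image of the analytic section $\cU\to\bU$ composed with the action, so it is open in $\cat{Hom}$.

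Third, for (1) and (3) I would chart each orbit as a homogeneous space $\bU/\bU_\varphi$. The stabilizer $\bU_\varphi$ is the unitary (resp.\ invertible) group of the relative commutant $\varphi(D)'\cap B$. This commutant is complemented in $B$ by the bounded conditional expectation $b\mapsto\sum_i\varphi(e_i')b\varphi(e_i'')$, which is an idempotent. Its image is a closed Lie subalgebra with a closed complement, so \cite[\S 6]{upm_ban}/Bourbaki results make $\bU_\varphi$ a Banach Lie subgroup. Then $\bU\to\bU/\bU_\varphi$ is an analytic principal $\bU_\varphi$-bundle. The local section from the second step shows that the orbit map identifies $\bU/\bU_\varphi$ with the open subset $\text{orbit}(\varphi)$ as analytic manifolds, compatibly on overlaps. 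Since orbits are open, the manifold structures glue to one on the whole space, which gives (1)--(3); (4) then follows from the local sections.

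I expect the main obstacle to be the first step. One has to check that the finite-spectrum condition really does make the correspondence with $\cat{Hom}(\cO_F^*,A)$ bijective and bi-continuous, and in particular that nearby representations again have spectrum in $F$ (or at least that the neighbourhood is taken inside the fixed-$F$ stratum). This requires care with the Peter--Weyl decomposition in the non-Kac case, where the Haar state is not tracial. First I would realize $p_\alpha$ via the Haar state $h$ and the character $\chi_\alpha$ twisted by the modular $F$-matrices, and show that $\alpha\mapsto p_\alpha$ is continuous in $U$. If the twisted spectral projections are norm-continuous, then the integer-valued multiplicity data is locally constant, and the rest follows.
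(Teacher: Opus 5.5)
Your architecture is the paper's. You identify $\cat{Rep}_{\subseteq F}(\bG,A)$ (resp.\ $\cat{Rep}_{\subseteq F}(\bG,E)$) with unital ($*$-)morphisms out of the finite-dimensional semisimple algebra $\cO_F^*=\cC(\bG)_F^*$, and then import the local theory of such morphism spaces. The paper cites that local theory; you prove it by hand: the separability-idempotent intertwiner $g$ and its polar part, the expectation $b\mapsto\sum_i\varphi(e_i')b\varphi(e_i'')$ onto $\varphi(D)'\cap B$ making the stabilizer a Banach Lie subgroup, and transport of structure from $\bU/\bU_\varphi$. That part is sound and self-contained.

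It gives item (1) as an analytic manifold compatible with the norm topology, which is what Theorem~A literally asks. The paper's \Cref{th:unifmor2amfld,th:fin.spc.ban} assert more: split submanifolds of $U(\cC(\bG)\underline{\otimes}A)$, resp.\ $\cL(E,\cC(\bG)\otimes_{\varepsilon}E)$. You would get that by noting that $g(\psi)$ is defined and analytic for \emph{all} linear $\psi$ near $\varphi$. This gives an analytic left inverse of the orbit embedding on an open subset of $\cL(\cO_F^*,A)$, and then you use the complemented subspace $\cC(\bG)_F\otimes A$ as the paper does.

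There are two small slips. First, unital morphisms $\cO_F^*\to A$ correspond to $\cat{Rep}_{\subseteq F}$, not $\cat{Rep}_F$, since a morphism killing a block gives a smaller spectrum. Second, there are no integer multiplicities in a general $A$ or $\cL(E)$; for a single $\alpha$, the only thing stable under small perturbation is whether $P^\alpha$ vanishes.

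The genuine gap is at the step you flagged as the main obstacle. Norm-continuity of each $U'\mapsto P^{\alpha}_{U'}$ only shows that, for each \emph{fixed} $\alpha$, the condition $\alpha\in\spec U'$ is locally constant on a neighbourhood depending on $\alpha$. That is continuity into $2^{\mathrm{Irr}(\bG)}$ with the product topology, i.e.\ part (1) of \Cref{le:spc.cont}. When $\mathrm{Irr}(\bG)$ is infinite, this gives no single neighbourhood of $U$ on which every $\beta\notin F$ stays out of the spectrum, so ``the rest follows'' does not follow. Your fallback of working inside the stratum does not help either. Openness of orbits in $\cat{Rep}_{\cat{fs}}$, the neighbourhood $\cU$ in (4), and the manifold structure on $\cat{Rep}_{\cat{fs}}$ in its norm topology all require $\cat{Rep}_{\subseteq F}$ to be open.

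The missing observation is uniform over the finitely many $\alpha\in F$. The sum $Q_{U'}:=\sum_{\alpha\in F}P^{\alpha}_{U'}$ is an idempotent (a projection in the $C^*$ case), continuous in $U'$, with $Q_U=1$. An idempotent $e=1-Q_{U'}$ with $\|e\|<1$ vanishes, since $\|e\|=\|e^n\|\le\|e\|^n$ for all $n$. Hence $P^{\beta}_{U'}=P^{\beta}_{U'}Q_{U'}=0$ for $\beta\notin F$ once $U'$ is close to $U$. The paper phrases this as conjugacy of the tuples $(P^{\alpha}_{U'})_{\alpha\in F}$ and $(P^{\alpha}_{U})_{\alpha\in F}$.

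Your own machinery closes the gap just as well. For \emph{every} $U'$, the map $\psi:=\varphi_{U'}$, $\omega\mapsto(\omega\otimes\id)U'$, is a possibly non-unital morphism $\cO_F^*\to A$, continuous in $U'$. The element $g=\sum_i\psi(e_i')\varphi_U(e_i'')$ still satisfies $\psi(d)g=g\varphi_U(d)$ and $\|g-1\|\le C\|\psi-\varphi_U\|$ without unitality of $\psi$. Taking $d=1$ with $g$ invertible forces $\psi(1)=1$, i.e.\ $\spec U'\subseteq F$. The same argument works verbatim for $\rho\in\cat{Rep}(\bG,E)$.
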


The finite-spectrum requirement requires explication, serving also to bridge the paper's two main sections. Note that the classical results \Cref{thn:ban.fib.both} extends all involve \emph{norm}-continuous compact-group representations $\bG\to GL(E)$ on Banach spaces $E$ rather than the broader, familiar \cite[\S 2]{rob} class of \emph{strongly} continuous ones:
\begin{equation*}
  \forall\left(v\in E\right)
  \left(\bG\ni g\xmapsto[\quad\text{continuous}\quad]{\quad}gv\in E\right).
\end{equation*}
Classically, that norm continuity is precisely characterized, representation-theoretically, by aforementioned \emph{spectrum finiteness}: the norm-continuous representations are exactly those with finitely many \emph{isotypic components} \cite[Theorem 4.1]{hm5} $\rho^{\alpha}$:
\begin{equation*}
  \begin{aligned}
    \bG
    &\xrightarrow[\quad\text{norm-continuous}\quad]{\quad}
      GL(E)
      \iff
      \rho=\bigoplus_{\alpha\in \mathrm{Irr}(\bG)}\rho^{\alpha}
      \text{ is finite}\\
    \rho^{\alpha}
    &:=\text{largest direct sum of copies of $\alpha\in \mathrm{Irr}(\bG)$}.
  \end{aligned}  
\end{equation*}
See for instance \cite[Corollary 2]{zbMATH05628052} for this precise statement, \cite[Theorem 11]{klm_unif} for a unitary precursor in the presence of additional tameness constraints on $\bG$, and \cite[Theorem 3.10]{Chirvasitu2026JNCG604} for multiple equivalent characterizations.

Compact quantum groups admit isotypic decompositions to their Banach-space/unitary representations (with background recalled in \Cref{se:nc.fin.spec}), hence our casting the finite-spectrum constraint, below, as an appropriate substitute for norm continuity.

There are also, however, more direct analogues of (quantum) norm continuity: for a unitary representation of the compact quantum group $\bG$, cast \Cref{eq:rep.gh} as a unitary element $U\in M(\cC(\bG)\underline{\otimes} \cK(H))$, for instance, one can simply require that $U$ belong to the spatial $C^*$ tensor product $\cC(\bG)\underline{\otimes} \cL(H)$. It becomes natural, in context, to compare the two competing concepts of norm continuity; \Cref{th:equivunif} proves them equivalent assuming $\bG$ \emph{has a classical point}, i.e. the quantum function algebra $\cC(\bG)$ has a multiplicative state.

\begin{theorem}\label{th:equivunif}
  Let $\bG$ be a compact quantum group with at least one classical point. The following conditions on a unitary $\bG$-representation
  \begin{equation}\label{eq:gurep}
    U\in M(\cC(\bG)\underline{\otimes}\cK(H))
    ,\quad
    H\text{ a Hilbert space}
  \end{equation}
  are equivalent.
  \begin{enumerate}[(a),wide]
  \item\label{item:th:equivunif:finisot} $U$ has finitely many isotypic components. 

  \item\label{item:th:equivunif:unif} $U$ is uniformly continuous in the sense that $U\in \cC(\bG)\underline{\otimes}\cL(H)$.
  \end{enumerate}
\end{theorem}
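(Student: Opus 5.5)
The plan is to prove \Cref{item:th:equivunif:finisot} $\Rightarrow$ \Cref{item:th:equivunif:unif} directly from the structure of isotypic components, and \Cref{item:th:equivunif:unif} $\Rightarrow$ \Cref{item:th:equivunif:finisot} by approximating $U$ with an element of the algebraic tensor product and applying Haar-state orthogonality.

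\textbf{\Cref{item:th:equivunif:finisot} $\Rightarrow$ \Cref{item:th:equivunif:unif}.} Decompose $H=\bigoplus_{\alpha}H_\alpha$ into isotypic components, recalled in \Cref{se:nc.fin.spec}. Each $H_\alpha$ is $U$-invariant, in the sense that $1\otimes P_\alpha$ commutes with $U$. Moreover, after choosing a multiplicity space $K_\alpha$, we have $H_\alpha\cong \bC^{n_\alpha}\otimes K_\alpha$ with
\begin{equation*}
  U(1\otimes P_\alpha)=u^\alpha\otimes 1_{K_\alpha},\qquad u^\alpha\in \cC(\bG)\otimes M_{n_\alpha},
\end{equation*}
for a fixed irreducible $u^\alpha$. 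Such an element lies in $\cC(\bG)\otimes M_{n_\alpha}\otimes \cL(K_\alpha)\subseteq \cC(\bG)\underline{\otimes}\cL(H)$. A finite sum of these is again in $\cC(\bG)\underline{\otimes}\cL(H)$.

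\textbf{\Cref{item:th:equivunif:unif} $\Rightarrow$ \Cref{item:th:equivunif:finisot}.} Fix $0<\delta<1$. Elements of the spatial tensor product are norm limits of finite sums $\sum a_i\otimes T_i$, and $\mathcal{O}(\bG)$ is dense in $\cC(\bG)$. Hence there is
\begin{equation*}
  X\in \mathcal{O}(\bG)\odot\cL(H),\qquad \|U-X\|<\delta,
\end{equation*}
whose left legs all lie in the span $\mathcal{O}_F$ of matrix coefficients of some \emph{finite} set $F\subset\mathrm{Irr}(\bG)$. I then claim that every isotypic component $\beta$ of $U$ lies in $F$, which gives \Cref{item:th:equivunif:finisot}.

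For $\beta\notin F$, compress by $1\otimes P_\beta$. Set $U^\beta:=U(1\otimes P_\beta)$, which is unitary on $H_\beta$, and $Y:=(1\otimes P_\beta)X(1\otimes P_\beta)$. Compression does not alter the left legs, so $Y\in\mathcal{O}_F\odot\cL(H_\beta)$ and $\|U^\beta-Y\|<\delta$. Now apply the contractive slice map $h\otimes\id$, where $h$ is the Haar state. The left legs of $U^{\beta*}$ are coefficients of $\bar\beta$. By the orthogonality relations, $h(u^{\beta*}_{ij}a)=0$ for $a\in\mathcal{O}_F$, since $\beta\notin F$. This gives $(h\otimes\id)(U^{\beta*}Y)=0$, whereas $(h\otimes\id)(U^{\beta*}U^\beta)=1_{H_\beta}$. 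Therefore
\begin{equation*}
  1=\|(h\otimes\id)(U^{\beta*}(U^\beta-Y))\|\le\|U^\beta-Y\|<\delta<1,
\end{equation*}
a contradiction unless $H_\beta=0$.

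\textbf{Main obstacle.} The step I expect to be most delicate is the orthogonality claim $(h\otimes\id)(U^{\beta*}Y)=0$ for non-Kac $\bG$. I would check it via the Woronowicz orthogonality relations, where the twisting by $F$-matrices does not affect vanishing between inequivalent irreducibles. I would also make sure the slice map $h\otimes\id$ extends to $M(\cC(\bG)\underline{\otimes}\cK(H))$ strictly. On that view the classical-point hypothesis may not be needed for this argument. If it is needed, I expect it enters through identifying $U(1\otimes P_\beta)$ with a genuine element of $\cC(\bG)\otimes\cL(H_\beta)$. In that case I would fall back on the multiplicative state $\varepsilon$: the functional $\varepsilon\star\bar\varepsilon$, with $\bar\varepsilon(a)=\overline{\varepsilon(a^*)}$, extends the counit boundedly, and $(\varepsilon\otimes\id)U$ is a unitary that can be used to normalize $U$.
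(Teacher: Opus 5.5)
Your proof is correct, and for the substantive implication (b)$\Rightarrow$(a) it takes a genuinely different route from the paper's.

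The paper argues indirectly. It invokes \cite[Theorem 3.16]{Chirvasitu2026JNCG604}, which equates finite spectrum with the map $T\mapsto U^*(1\otimes T)U$ extending from $\cK(H)$ to a Podle\'s action $\cL(H)\to\cC(\bG)\underline{\otimes}\cL(H)$. Hypothesis (b) makes the formula land in the right algebra. The classical point is used precisely to get a bounded counit, which reduces the Podle\'s density condition to $(\varepsilon\otimes\id)\rho=\id$; that identity then passes from $\cK(H)$ to $\cL(H)\cong M(\cK(H))$ by uniqueness of strict extensions.

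You argue directly instead. You approximate $U$ within distance $<1$ by some $X\in\cO_F\odot\cL(H)$ and use the contractions $Z\mapsto(h\otimes\id)\bigl(U^*(1\otimes P_\beta)Z\bigr)$. These send $U$ to $P_\beta$ and annihilate $\cO_F\odot\cL(H)$ for $\beta\notin F$, by the Peter--Weyl orthogonality $h(x^*y)=0$ for $x\in C^\beta$, $y\in C^\gamma$, $\gamma\ne\beta$. That orthogonality holds for every compact quantum group, Kac or not.

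This is elementary and self-contained, and, as you suspected, it never uses the classical point. The Haar state $h$ is a state on every intermediate completion $\cC(\bG)$. Under (b), every element you slice lies in $\cC(\bG)\underline{\otimes}\cL(H)$; indeed $U(1\otimes P_\beta)\cong u^\beta\otimes 1_{K_\beta}$ is already in the algebraic tensor product. So no strict extension of $h\otimes\id$ is needed.

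Your argument therefore proves the equivalence for arbitrary compact quantum groups. It also applies verbatim to $U\in\cC(\bG)\underline{\otimes}A$, so the equality $\cat{Rep}(\bG,A)=\cat{Rep}_{\cat{fs}}(\bG,A)$ in \Cref{cor:if.cls.pt} holds without a classical point too. What the paper's route buys instead is the identification of uniform continuity with the conjugation map being an honest action on $\cL(H)$, which is the form reused in \Cref{th:isautofinsp}.

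The hedging in your last paragraph is therefore unnecessary, and the fallback it proposes is off anyway. For a $*$-character one has $\bar\varepsilon=\varepsilon$, so $\varepsilon\star\bar\varepsilon$ is not the counit; the usual formula is $\chi\star(\chi\circ S)$ for a character $\chi$.
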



\section{Norm continuity for CQG representations and finite spectra}\label{se:nc.fin.spec}

We take for granted the familiar background on \emph{compact quantum groups} as covered, for instance, in \cite{kt_qg-surv-1,NeTu13,tim,wor-cqg} and like sources (with focused references as needed). Specifically, every such is a virtual object $\bG$ dual to a number of function-algebra-type incarnations, circumstances determining the appropriate choice:
\begin{itemize}[wide]
\item A \emph{CQG algebra} \cite[Definition 2.2]{dk_cqg} $\cO(\bG)$, i.e. a complex (\emph{cosemisimple} \cite[Definition 2.4.1]{mont}) Hopf $*$-algebra whose simple comodules are \emph{unitarizable}: admitting a right-linear inner product $\Braket{-\mid-}$ invariant under the $\cO(\bG)$-coaction, in the sense \cite[(1.8)]{dk_cqg} that
  \begin{equation*}
    \forall\left(v,w\in V\right)
    \left(
      \Braket{v\mid w_0}Sw_1
      =
      \Braket{v_0\mid w}v_1^*
    \right)
  \end{equation*}
  in \emph{Sweedler notation} \cite[Notation 1.4.2 and post Definition 1.6.2]{mont} for the coaction
  \begin{equation*}
    V\ni v
    \xmapsto{\quad}
    v_0\otimes v_1
    \in V\otimes \cO(\bG).
  \end{equation*}
  $\cO(\bG)$ is meant as analogous to the algebra of \emph{representative functions} \cite[Definition 3.3]{hm5} on an ordinary compact group. 

\item The universal $C^*$ completion $\cC_u(\bG)$ \cite[post Corollary 1.7.5]{NeTu13} of $\cO(\bG)$.

\item The \emph{reduced} version $\cC_r(\bG)$, image of the GNS representation of $\cO(\bG)$ with respect to its \emph{Haar integral (or state)} $h_{\bG}\in \cO(\bG)^*$ \cite[\S 1.2]{NeTu13}.

\item Intermediate avatars between $\cC_u$ and $\cC_r$ generally (leading to the $C^*$ version of the definition in its broadest terms): $C^*$-algebras $A=\cC(\bG)$ sandwiched as
  \begin{equation*}
    \cC_u(\bG)
    \xrightarrowdbl{\quad}
    A
    \xrightarrowdbl{\quad}
    \cC_r(\bG)
  \end{equation*}
  so as to inherit $\cC_u$'s coassociative $C^*$ morphism $A\xrightarrow{\Delta} A\underline{\otimes} A$ (minimal tensor product). 
\end{itemize}

We will also be referring extensively to CQG \emph{unitary representations} on Hilbert spaces as conveniently recalled, say in \cite[\S 2.2]{dsv}: unitaries
\begin{equation*}
  \begin{aligned}
    U
    \in
    M\left(\cC_r(\bG)\underline{\otimes}\cK(H)\right)
    &\subseteq
      L^{\infty}(\bG)\overline{\otimes} \cL(H)
      \quad
      \left(\text{$W^*$ tensor product \cite[Definition IV.5.1]{tak1}}\right)\\
    \left(\Delta\otimes \id\right)U
    &=
      U_{13}U_{23}
      \quad\left(\text{\emph{leg notation} \cite[\S 2.1.2]{tim}}\right),
  \end{aligned}
\end{equation*}
with $M(-)$ and $\cK(-)$ denoting the respective \emph{multiplier algebra} \cite[Proposition 2.1.3]{wo} and compact-operator algebra and $L^{\infty}(\bG)\ge \cC_r(\bG)$ the von Neumann closure of the GNS representation attached to $h_{\bG}$.

Below, we frequently cast $\bG$ as dual to such a generic Hopf $C^*$-algebra $\cC(\bG)$ (rather than specifically its reduced or universal versions). Set, in that framework,
\begin{equation}\label{eq:rep.gh}
  \cat{Rep}(\bG,H)
  :=
  \left\{
    U\in UM\left(\cC(\bG)\underline{\otimes}\cK(H)\right)
    \ :\
    \left(\Delta\otimes \id\right)U
    =
    U_{13}U_{23}
  \right\}.
\end{equation}
The decomposition
\begin{equation*}
  \begin{aligned}
    \cO(\bG)
    &\cong
      \bigoplus_{\alpha\in \mathrm{Irr}(\bG)}C^{\alpha}
      =
      \bigoplus_{\alpha}\spn\{u_{ij}^{\alpha},\ 1\le i,j\le \dim V^{\alpha}\}\\
    \mathrm{Irr}(\bG)
    &:=
      \left(
      \text{classes of simple $\cO(\bG)$-comodules }
      V^{\alpha}
      \xrightarrow{\quad\rho^{\alpha}\quad}
      V^{\alpha}\otimes \cO(\bG)
      \right)
  \end{aligned}
\end{equation*}
(the $u^{\alpha}_{ij}$ being the usual matrix coefficients of $\bG$ \cite[Proposition 4.7 and surrounding discussion]{wor}) affords \cite[Theorem 1.5]{podl_symm}
\begin{itemize}[wide]
\item functionals $\psi^{\alpha}\in \cO(\bG)^*$ respectively annihilating all but the diagonal $u^{\alpha}_{ii}$, sent to 1 instead;
\item whence also (\emph{spectral}) projections
  \begin{equation}\label{eq:spc.proj}
    P^{\alpha}
    =
    P_U^{\alpha}
    :=
    \left(\psi^{\alpha}\otimes\id\right)U^*
    \in
    \cL(H),
  \end{equation}
  each having the \emph{$\alpha$-isotypic component} $H^{\alpha}\le H$ of the representation $U$ as its image: the largest $\bG$-invariant subspace of $H$ on which the representation decomposes as a sum of copies of $\rho^{\alpha}$. 
\end{itemize}
The \emph{spectrum} $\spec U$ is then the collection of $\alpha$ producing non-zero $P^{\alpha}$. 

\pf{th:equivunif}
\begin{th:equivunif}
  We focus on the interesting implication \Cref{item:th:equivunif:unif} $\Rightarrow$ \Cref{item:th:equivunif:finisot}, the converse being straightforward (see also \cite[Remark 3.9]{Chirvasitu2026JNCG604}).
  
  The meaning of the hypothesis is that there is some continuous multiplicative $*$-morphism $\cC(\bG)\to \bC$. As explained in \cite[paragraph preceding Theorem 2.2]{zbMATH07377294}, it follows that the \emph{counit} \cite[post Definition 1.6.1]{NeTu13} of $\bG$ extends continuously across $\cC(\bG)$.
  
  \cite[Theorem 3.16]{Chirvasitu2026JNCG604} equates \Cref{item:th:equivunif:finisot} to the extensibility of
  \begin{equation}\label{eq:kconjbyu}
    \cK(H)
    \ni
    T
    \xmapsto{\quad}
    U^*(1\otimes T)U
    \in
    M(\cC(\bG)\underline{\otimes}\cK(H))
  \end{equation}
  to an \emph{action} \cite[Definition 1.4]{podl_symm}
  \begin{equation}\label{eq:lconjbyu}
    \cL(H)
    \xrightarrow{\quad\rho\quad}
    \cC(\bG)\underline{\otimes}\cL(H).
  \end{equation}
  Given the assumed \Cref{item:th:equivunif:unif}, the selfsame formula \Cref{eq:kconjbyu} certainly provides a coassociative map \Cref{eq:lconjbyu}; the one outstanding ingredient is the density constraint in \cite[Definition 1.4 b)]{podl_symm}:
  \begin{equation*}
    \overline{\spn\left\{(y\otimes 1)\rho(x)
        =
        (y\otimes 1)U^*(1\otimes x)U
        \ |\ x\in \cL(H),\ y\in \cC(\bG)\right\}}^{\|\cdot\|}
    =
    \cC(\bG)\underline{\otimes}\cL(H). 
  \end{equation*}
  In the presence of a counit $\cC(\bG)\xrightarrow{\varepsilon}\bC$ that density condition is equivalent (as follows immediately from \cite[Theorem 6.3]{MR1629723}, say) to
  \begin{equation*}
    \cL(H)
    \xrightarrow{\quad(\varepsilon\otimes \id)\rho = \id\quad}
    \cL(H).
  \end{equation*}
  That condition holds for $\cK(H)$, hence the (unique \cite[Lemma 3.2.4 and Theorem 15.2.12]{wo}) extension
  \begin{equation*}
    \begin{tikzpicture}[>=stealth,auto,baseline=(current  bounding  box.center)]
      \path[anchor=base] 
      (0,0) node (l) {$\cK(H)$}
      +(3,.5) node (u) {$\cL(H)\cong M(\cK(H))$}      
      +(6,0) node (r) {$\cL(H)$}
      ;
      \draw[right hook->] (l) to[bend left=6] node[pos=.5,auto] {$\scriptstyle $} (u);
      \draw[->] (u) to[bend left=6] node[pos=.5,auto] {$\scriptstyle \id$} (r);
      \draw[right hook->] (l) to[bend right=6] node[pos=.5,auto,swap] {$\scriptstyle (\varepsilon\otimes \id)\rho$} (r);
    \end{tikzpicture}
  \end{equation*}
  to the multiplier algebra.
\end{th:equivunif}


\Cref{th:equivunif} recovers the classical characterization of norm-continuous unitary representations of a (plain, non-quantum) compact group as precisely those with finite spectrum (\cite[Corollary 2]{zbMATH05628052} in general, \cite[Theorem 3.10]{Chirvasitu2026JNCG604} for a number of alternative characterizations, or \cite[Theorem 11]{klm_unif} for unitary representations of \emph{locally} compact connected $2^{nd}$ countable groups): naturally, commutative $\cC(\bG)$ always have counits.

The discussion extends to (what we will be referring to as) \emph{uniform} (or \emph{uniformly continuous}) $\bG$-representations in arbitrary $C^*$-algebras $A$: patterning the definition on that of a uniform unitary $\bG$-representation (operative in \Cref{th:equivunif}),
\begin{equation*}
  \cat{Rep}(\bG,A)
  :=
  \left\{U\in \text{unitary group }U\left(\cC(\bG)\underline{\otimes}A\right)\ :\ \left(\Delta\otimes \id\right)U = U_{13}U_{23}\right\}
\end{equation*}
(cf. \cite[p.292]{kus-univ}). Spectra are meaningful in this setting as well.

\begin{definition}\label{def:spec.rep.g.a}
  For $U\in \cat{Rep}(\bG,A)$,
  \begin{equation*}
    \spec U
    :=
    \spec \wt{U}
    ,\quad
    \begin{aligned}
      \wt U
      &:=(\id\otimes \iota)U\in U\left(\cC(\bG)\underline{\otimes}\cL(H)\right)\\
      A
      &\lhook\joinrel\xrightarrow[\quad \text{$C^*$ embedding}\quad]{\quad\iota\quad}\cL(H).
    \end{aligned}  
  \end{equation*}
  Plainly, \Cref{eq:spc.proj} (with $\wt{U}$ in place of $U$) vanishes for one $\iota$ if and only if it does for any other, hence the definition's independence of embedding. 
\end{definition}

The proof of \Cref{th:equivunif} now also yields:

\begin{theorem}\label{th:isautofinsp}
  For a compact quantum group $\bG$ with a classical point and a unital $C^*$-algebra $A$ a representation $U\in \cC(\bG)\underline{\otimes}A$ has finite spectrum in any of the following mutually equivalent senses:
  \begin{enumerate}[(a),wide]
  \item The unitary $\bG$-representation
    \begin{equation*}
      (\id\otimes\psi)U\in \cC(\bG)\underline{\otimes}\cL(H)
    \end{equation*}
    has finite spectrum for any $*$-representation $A\xrightarrow{\psi}\cL(H)$.

  \item As above, for \emph{one} faithful $A\xrightarrow{\psi}\cL(H)$.
    
  \item The conjugation $\bG$-action $U^*(1\otimes\bullet)U$ of $\bG$ on $A$ induced by $\bG$ has finite spectrum.  \qedhere
  \end{enumerate}
\end{theorem}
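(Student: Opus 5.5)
The plan is to prove the cycle (a) $\Rightarrow$ (b) $\Rightarrow$ (c) $\Rightarrow$ (a). Two of these arrows are essentially formal; the third reruns the mechanism from the proof of \Cref{th:equivunif}.

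\textbf{(a) $\Rightarrow$ (b).} This is trivial, since faithful $*$-representations exist (GNS).

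\textbf{(b) $\Rightarrow$ (c).} Fix a faithful $\psi$ and set $\wt U:=(\id\otimes\psi)U$. By (b), $\wt U$ has finite spectrum. Identify $A$ with $\psi(A)\le\cL(H)$. The conjugation action $\cL(H)\ni T\mapsto \wt U^*(1\otimes T)\wt U$ then restricts to the conjugation action on $A$.

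The spectrum of the conjugation action on $\cL(H)$ is contained in the finite set of irreducibles occurring in the tensor products $\bar\alpha\otimes\beta$, with $\alpha,\beta\in\spec\wt U$. To see this, decompose $T=\sum_{\alpha,\beta}P^\alpha T P^\beta$ using the spectral projections \Cref{eq:spc.proj}. Each block $P^\alpha\cL(H)P^\beta$ is invariant, and is isomorphic to $\cL(H^\beta,H^\alpha)$, which is built from multiplicity spaces tensored with $\bar V^\beta\otimes V^\alpha$, so its spectrum lies inside that of $\bar\beta\otimes\alpha$. Only finitely many blocks occur, and each tensor product of irreducibles has finitely many constituents.

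The isotypic projections of the conjugation action on $A$ are the restrictions of those on $\cL(H)$, since the $\psi^\gamma$-slices are computed by the same formula. Hence the spectrum of the action on $A$ is contained in a finite set.

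\textbf{(c) $\Rightarrow$ (a), the main step.} Let $\pi\colon A\to\cL(K)$ be an arbitrary $*$-representation and set $V:=(\id\otimes\pi)U\in\cC(\bG)\underline{\otimes}\cL(K)$. Conjugation by $V$ on the $C^*$-algebra $B:=\overline{\pi(A)}$ is the pushforward of the finite-spectrum action on $A$. Its spectrum is therefore finite: $\pi$ is equivariant, so it maps isotypic components into isotypic components, and it is surjective onto a dense subalgebra. I then want to transfer finiteness of the conjugation spectrum to finiteness of $\spec V$.

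This is where the classical point enters, exactly as in the proof of \Cref{th:equivunif}. The counit $\varepsilon$ extends continuously to $\cC(\bG)$, and $(\varepsilon\otimes\id)V=1$. I would apply \cite[Theorem 3.16]{Chirvasitu2026JNCG604}, which equates finiteness of $\spec V$ with the conjugation $T\mapsto V^*(1\otimes T)V$ defining an honest action on $\cL(K)$, i.e. landing in $\cC(\bG)\underline{\otimes}\cL(K)$ and satisfying Podleś density. Landing in the minimal tensor product is automatic because $V\in\cC(\bG)\underline{\otimes}\cL(K)$. Density follows, as before, from the counit identity via \cite[Theorem 6.3]{MR1629723}. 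Note that this argument does not even use (c): it shows directly that any uniform representation of a $\bG$ with a classical point has finite spectrum in every $*$-representation. So (a) holds unconditionally for $U\in\cC(\bG)\underline{\otimes}A$, and the three conditions are all equivalent and all true.

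\textbf{Expected obstacle.} The main obstacle is justifying that last assertion. Either \cite[Theorem 3.16]{Chirvasitu2026JNCG604} applies verbatim to $\cL(K)$ with $V$ merely in the minimal tensor product, or one must first pass from $(\id\otimes\pi)U$ to a representation on a nondegenerate subspace. Here unitality of $A$ keeps $\pi(1)$ a projection that commutes with everything relevant, and one can restrict to $\pi(1)K$.

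If that route fails, the fallback is to show (c) $\Rightarrow$ (a) through the block-decomposition argument read in reverse. Finite spectrum of the conjugation action forces the spectral projections $P^\alpha_V$ to be nonzero for only finitely many $\alpha$. The reason is that $P^\alpha_V\,\pi(a)\,P^\beta_V\neq0$ for suitable $a$ whenever both projections are nonzero, using $\pi(1)=1$ with $\alpha=\beta$ at the trivial-representation level. So each $\alpha\in\spec V$ contributes the constituent $\bar\alpha\otimes\alpha\ni\alpha$-dependent irreducibles, which must lie in the finite conjugation spectrum. Since tensor products with a fixed irreducible are finite, this bounds $\spec V$.
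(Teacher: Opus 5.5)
Your main line is correct and is essentially the paper's one-line proof. The counit argument from \Cref{th:equivunif} runs as follows: $(\varepsilon\otimes\id)V=1$, so $(\varepsilon\otimes\id)\rho=\id$, so Podle\'s density holds and the spectrum is finite by \cite[Theorem 3.16]{Chirvasitu2026JNCG604}; this applies to $V=(\id\otimes\pi)U\in\cC(\bG)\underline{\otimes}\cL(K)$ for every unital $\pi$. That is literally \Cref{th:equivunif} applied to $V$, so your ``expected obstacle'' is moot. Your block decomposition $T=\sum_{\alpha,\beta}P^{\alpha}TP^{\beta}$ soundly fills in the (b)$\Rightarrow$(c) step, which the paper leaves implicit.

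Discard the fallback paragraph, because it would not work. For $\alpha\ne\beta$ the blocks $P^{\alpha}_V\pi(A)P^{\beta}_V$ can all vanish (e.g.\ if $\pi(A)$ commutes with $V$). Moreover, forcing a nontrivial constituent of $\bar\alpha\otimes\alpha$ into a fixed finite set does not bound $\alpha$: for $SU(2)$, every such product with $\alpha$ nontrivial contains the spin-$1$ representation.
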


\begin{remark}\label{re:rec.cls}
  \Cref{th:equivunif,th:isautofinsp} in particular retrieve their classical $\bG$ counterparts by means rather different than what one typically encounter in the literature: the aforementioned \cite[Corollary 2]{zbMATH05628052}, \cite[Theorem 3.10]{Chirvasitu2026JNCG604} and analogues tend to rely on structural properties of locally compact groups and/or Banach Lie groups highly specific to their classical setup. 
\end{remark}

\section{Manifolds of quantum-group representations}\label{se:repmfld}

Recall from \cite[\S 3.2]{mart-proj} that for any compact group $\bG$, unital $C^*$-algebra $A$ and continuous \emph{2-cocycle} \cite[\S 1.1]{mart-proj} $\bG\times \bG\xrightarrow{\sigma}\bS^1$ the space $\cat{Rep}_{\sigma}(\bG,A)$ of representations $\bG\to A$ twisted by $\sigma$ (\emph{$\sigma$-representations} in \cite[\S 1.4]{mart-proj}, compatible also with the terminology of \cite[\S VII.2]{vrd}, say) is a real analytic \emph{Banach submanifold} of $\cC(\bG,A)$; the latter are to be understood in the sense of \cite[\S II.2]{lang-fund} or \cite[\S 5.8.3]{bourb_vars_1-7}.

Given that the uniformly continuous Banach-space representations of a compact group are precisely those with finite spectrum, for compact \emph{quantum} $\bG$ it is also the finite-spectrum unitary representations of \Cref{th:isautofinsp} that are best behaved with respect to affording smooth Banach-manifold parametrization. 

\Cref{th:unifmor2amfld} formalizes the last remark, and requires some preliminary observations concerning continuity properties of CQG-representation spectra. 

\begin{lemma}\label{le:spc.cont}
  Let $\bG$ be a compact quantum group and $H$ a Hilbert space. 
  \begin{enumerate}[(1),wide]
  \item\label{item:le:spc.cont:h} The map
    \begin{equation*}
      \cat{Rep}(\bG,H)
      \ni U
      \xmapsto{\quad}
      \spec U
      \in 2^{\mathrm{Irr}(\bG)}=\{0,1\}^{\mathrm{Irr}(\bG)}.
    \end{equation*}
    is continuous for the uniform topology on the domain and the compact-open topology on the codomain.

  \item\label{item:le:spc.cont:a} In particular, for any unital $C^*$-algebra $A$ the map
    \begin{equation*}
      \cat{Rep}(\bG,A)
      \ni U
      \xmapsto{\quad}
      \spec U     
      \in 2^{\mathrm{Irr}(\bG)}
    \end{equation*}
    enjoys the same continuity property, and restricts to a locally constant map on the open subset
    \begin{equation*}
      \cat{Rep}_{\cat{fs}}(\bG,A)
      :=
      \left\{U\in \cat{Rep}(\bG,A)\ :\ |\spec U|<\infty\right\}
      \subseteq
      \cat{Rep}(\bG,A).
    \end{equation*}
  \end{enumerate}
\end{lemma}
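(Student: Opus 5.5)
The plan is to prove \Cref{item:le:spc.cont:h} coordinatewise and then obtain \Cref{item:le:spc.cont:a} by transport along a faithful embedding $A\lhook\joinrel\xrightarrow{\iota}\cL(H)$, as in \Cref{def:spec.rep.g.a}. Since $\mathrm{Irr}(\bG)$ is discrete, the compact-open topology on $2^{\mathrm{Irr}(\bG)}$ is just the product topology. So continuity in \Cref{item:le:spc.cont:h} amounts to the following: for each fixed $\alpha$, the indicator $U\mapsto[P^{\alpha}_U\ne 0]$ is locally constant for the norm topology on $\cat{Rep}(\bG,H)$.

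\textbf{Step 1 (bounded formula for $P^\alpha$).} The functional $\psi^{\alpha}$ of \Cref{eq:spc.proj} is a priori defined only on $\cO(\bG)$, so I would first rewrite it as a Haar-state functional $\psi^{\alpha}=h_{\bG}(a_{\alpha}\,\cdot\,)$ for a suitable element $a_{\alpha}\in C^{\alpha^*}\subset\cO(\bG)$. The candidate is a scalar multiple (the quantum dimension) of the Woronowicz-character-twisted conjugate character of $\alpha$, obtained from the Schur orthogonality relations. Such a functional is bounded on every completion $\cC(\bG)$, and it extends to a bounded slice map on $M(\cC(\bG)\underline{\otimes}\cK(H))$ (strictly continuous on bounded sets). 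This gives the norm estimate $\|P^{\alpha}_U-P^{\alpha}_V\|\le \|a_{\alpha}\|\,\|U-V\|$. I expect this identification of $\psi^{\alpha}$ with $h_{\bG}(a_{\alpha}\,\cdot)$, and checking that the slice is legitimate at the multiplier level, to be the only real obstacle; I would verify the identity on the matrix coefficients $u^{\beta}_{ij}$ via the orthogonality relations of \cite{wor}.

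\textbf{Step 2 (local constancy).} Each $P^{\alpha}_U$ is an orthogonal projection. Two projections at distance $<1$ are unitarily equivalent, so in particular both vanish or neither does. Hence for $\|U-V\|<\|a_\alpha\|^{-1}$ we get $\alpha\in\spec U\iff\alpha\in\spec V$, which proves \Cref{item:le:spc.cont:h}. For \Cref{item:le:spc.cont:a}, the map $U\mapsto\wt U=(\id\otimes\iota)U$ is isometric and lands in $\cat{Rep}(\bG,H)$, and $\spec U=\spec\wt U$ by definition, so continuity follows by composition.

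\textbf{Step 3 (openness of $\cat{Rep}_{\cat{fs}}$ and local constancy on it).} Coordinatewise continuity only controls finitely many coordinates at a time, so a further argument is needed here. Suppose $S=\spec U$ is finite. The isotypic projections are mutually orthogonal and sum strongly to $1$, so $Q_U:=\sum_{\alpha\in S}P^{\alpha}_{\wt U}=1$. For $V$ close to $U$, Step 1 shows that $Q_V=\sum_{\alpha\in S}P^{\alpha}_{\wt V}$ is a projection within distance $<1$ of $1$, hence $Q_V=1$. This forces $P^{\beta}_{\wt V}=0$ for $\beta\notin S$, i.e.\ $\spec V\subseteq S$. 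Shrinking the neighbourhood further so that Step 2 applies to each of the finitely many $\alpha\in S$ yields $\spec V=S$. This simultaneously shows that $\cat{Rep}_{\cat{fs}}(\bG,A)$ is open and that $\spec$ is locally constant on it.
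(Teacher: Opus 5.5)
Your proof is correct and follows the paper's argument. Part (1) rests on norm continuity of $U\mapsto P^{\alpha}_U$ together with the fact that nearby projections vanish simultaneously, and part (2) on a perturbation of the finitely many $P^{\alpha}$, $\alpha\in\spec U$, which forces their sum to remain $1$. You also make explicit the boundedness of $\psi^{\alpha}$ (via the Haar state), which the paper leaves implicit. You then close Step 3 with the elementary fact that a projection within distance $<1$ of $1$ equals $1$, plus mutual orthogonality of isotypic projections, whereas the paper appeals to $U(A)$-conjugacy of nearby tuples of orthogonal projections.
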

\begin{proof}
  \begin{enumerate}[label={},wide]
  \item\textbf{\Cref{item:le:spc.cont:h}} For each $\alpha\in \mathrm{Irr}(\bG)$ the map $U\mapsto P^{\alpha}_U$ (see \Cref{eq:spc.proj}) is norm-norm continuous, and sufficiently (mutually) close projections are conjugate \cite[Proposition 5.2.6]{wo} (so do or do not vanish simultaneously). 
    
  \item\textbf{\Cref{item:le:spc.cont:a}} The twofold claim perhaps needing some verification is openness+local constancy. By \Cref{item:le:spc.cont:h} (and its proof), all representations $U'$ sufficiently close to $U\in \cat{Rep}_{\cat{fs}}(\bG,A)$ will have its $P^{\alpha}_{U'}$, $\alpha\in \spec U$ respectively norm-close to $P^{\alpha}_U$. We thus have close
    \begin{equation*}
      \left(P^{\alpha}_{U'}\right)_{\alpha\in \spec U}
      \ 
      \sim
      \ 
      \left(P^{\alpha}_{U}\right)_{\alpha\in \spec U}
      \ \text{in}\ C^*\left(\bC^{|\spec U|}, A\right).
    \end{equation*}
    These will be mutual conjugates under the unitary group $U(A)$ (e.g. \cite[Theorem 4.1]{zbMATH07787455}, \cite[Proposition 2.2]{MR1354040} for $A=\cL(H)$, etc.), so that
    \begin{equation*}
      \sum_{\alpha\in \spec U}P^{\alpha}_{U'}
      =
      \sum_{\alpha\in \spec U}P^{\alpha}_{U}
      =
      1.
    \end{equation*}
    In particular, $\spec U'=\spec U$ (finite, by assumption). 
  \end{enumerate}
\end{proof}

\begin{remarks}\label{res:wtops}
  \begin{enumerate}[(1),wide]
  \item\label{item:res:wtops:tops} There are several natural ways to topologize unitary $\bG$-representations. \cite[p.85]{dsv}, for instance, restricts the \emph{strict} topology \cite[\S 3.12.17]{ped-aut} of $M(-)$ to its unitary subgroup. There are also available all of the usual weaker-than-norm locally convex topologies one typically equips $\cL(H)$ spaces with (recalled, say, in \cite[\S II.2]{tak1} or \cite[\S 2.1.7]{ped-aut}, etc.). These all agree on the unitary group $U\left(L^{\infty}(\bG)\overline{\otimes} \cL(H)\right)$ (and equip it with a topological-group structure):
    \begin{itemize}[wide]
    \item weak=weak$_{\sigma}$, strong=strong$_{\sigma}$ and strong$^*$=strong$^*_{\sigma}$ on bounded sets as noted in the selfsame \cite[\S 2.1.7]{ped-aut};
    \item weak=strong (on isometries, so also unitaries) by \cite[Solution 20]{hal_hspb_2e_1982}, and strong=strong$^*$ on normal operators in any case \cite[Solution 116]{hal_hspb_2e_1982};
    \item per \cite[Theorem II.7]{zbMATH03253462}, on the unit ball strong$^*$=\emph{Mackey} \cite[\S 36]{trev_tvs}, the strongest locally convex topology affording the same space of continuous functionals as weak$_{\sigma}$;

    \item and finally, the unitaries constitute a topological group under these by \cite[Proposition 18.4.15]{hjjm_bdle}.
    \end{itemize}

  \item\label{item:res:wtops:cls.exs} There is no reason why \Cref{le:spc.cont} would hold for the weaker topologies flagged in \Cref{item:res:wtops:tops} above: it is a simple matter (\Cref{ex:cls.str.conv}) to produce finite-spectrum nets of classical compact-group representations converging in the strict topology (so also weakly/strongly/Mackey/etc.) without the corresponding compact-open spectrum convergence.
  \end{enumerate}  
\end{remarks}

\begin{example}\label{ex:cls.str.conv}  
  On an infinite-dimensional Hilbert space $H$, consider an increasing net
  \begin{equation*}
    \cL(H)
    \ni
    \left(\text{projection }P_{\lambda}\right)
    \xrightarrow[\quad\lambda\quad]{\quad\text{strong$^*$}\quad}
    1
    \in \cL(H)
    ,\quad \dim P_{\lambda}H^{\perp}=\infty
  \end{equation*}
  and have any compact group $\bG$ act trivially on $P_{\lambda}H$ and $\alpha$-isotypically on $P_{\lambda}H^{\perp}$ for some $1\ne \alpha\in \mathrm{Irr}(\bG)$. The corresponding
  \begin{equation}\label{eq:ulambda}
    \begin{aligned}
      U_{\lambda}
      \in
      \left(
      M\left(\cC(\bG)\underline{\otimes}\cK(H)\right)
      ,\ \text{strict topology}
      \right)
      &\overset{}{\cong}
        \cC_b(\bG,\cL(H)_{\text{strict}})
        \quad\left(\text{\cite[Corollary 3.4]{apt}}\right)\\
         &:=
           \left\{
           \bG\xrightarrow[\ \text{strictly continuous}\ ]{\ \text{bounded}}\cL(H)\cong M\left(\cK(H)\right)
        \right\}
    \end{aligned}    
  \end{equation}
  all have size-2 spectrum $\{1,\alpha\}$ and converge strictly to the singleton-spectrum trivial representation on $H$: indeed, the strict topology on $\cL(H)$ is Mackey \cite[Corollary 2.8]{taylor}, so coincides with the strong$^*$ topology on bounded sets by \cite[Theorem II.7]{zbMATH03253462} again (cf. \Cref{res:wtops}\Cref{item:res:wtops:tops}).
\end{example}

\begin{remark}\label{re:bdd.strct}
  $\bG$ being compact, the boundedness requirement in \Cref{eq:ulambda} is not needed (i.e. it is fulfilled automatically):
  \begin{itemize}[wide]
  \item compact subsets of topological vector spaces are \cite[\S 15.6(6)]{k_tvs-1} \emph{bounded} in the abstract sense of \cite[\S 15.6]{k_tvs-1};

  \item and for the strict topology on $\cL(H)\cong M\left(\cK(H)\right)$ that abstract boundedness coincides with the usual notion of norm boundedness per the \emph{uniform boundedness principle} \cite[\S 15.13(2)]{k_tvs-1}.
  \end{itemize}
\end{remark}

\begin{theorem}\label{th:unifmor2amfld}
  Let $\bG$ be a compact quantum group and $A$ a unital $C^*$-algebra.

  \begin{enumerate}[(1),wide]
  \item The finite-spectrum representations $U\in \cC(\bG)\underline{\otimes}A$, in their norm topology, constitute a real analytic Banach submanifold
    \begin{equation}\label{eq:th:unifmor2amfld:repfs}
      \cat{Rep}_{\cat{fs}}(\bG,A)
      \subset
      \text{unitary group }
      U(\cC(\bG)\underline{\otimes}A).
    \end{equation}

  \item The manifold \Cref{eq:th:unifmor2amfld:repfs} is locally homogeneous under the conjugation action of the unitary group $U(A)$, in the sense that the orbits of that action are open in $\cat{Rep}_{\cat{fs}}$.

  \item For every $U(A)$-orbit $\cO_{U}$ in $\cat{Rep}_{\cat{fs}}(\bG,A)$ the orbit map
    \begin{equation*}
      U(A)\ni V
      \xmapsto{\quad}
      V\triangleright U := (1\otimes V)U(1\otimes V^*)
      \in \cO_U
    \end{equation*}
    is an analytic principal fibration.
    
  \item In particular, for every $U\in \cat{Rep}_{\cat{fs}}(\bG,A)$ there is an open neighborhood $\cU\ni U$ and an analytic map
    \begin{equation*}
      \cU\ni U'
      \xmapsto{\quad}
      V_{U'}\in U(A)
      ,\quad
      V_U=1
      ,\quad
      V_{U'}\triangleright U'=U. 
    \end{equation*}
  \end{enumerate}
\end{theorem}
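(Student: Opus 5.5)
The strategy is to reduce everything to a finite-dimensional "coefficient" picture and then produce the local analytic sections in (4) directly, by an averaging construction. Once (4) is in hand, (2) and (3) follow formally, and (1) follows by transporting the manifold structure of the Banach Lie group $U(A)$.

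Fix $U\in\cat{Rep}_{\cat{fs}}(\bG,A)$ and set $S:=\spec U$, which is finite. By \Cref{le:spc.cont} there is a norm neighborhood $\cU_0\ni U$ in which every $U'$ has $\spec U'=S$. For such $U'$ the finite sum of spectral projections equals $1$, and $U'$ lies in the finite-dimensional-coefficient subspace
\begin{equation*}
  \cat{C}_S\otimes A\subset \cC(\bG)\underline{\otimes}A,
  \qquad
  \cat{C}_S:=\bigoplus_{\alpha\in S}C^{\alpha}.
\end{equation*}
To see this, apply $(\id\otimes\psi^{\alpha})$-type slicing: the coefficients of $U'$ against the Haar state vanish outside $\cat{C}_S$.

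The candidate section is the averaged intertwiner
\begin{equation*}
  T_{U'}:=(h_{\bG}\otimes\id)\bigl(U'(1\otimes 1)U^*\bigr)=(h_{\bG}\otimes\id)(U'U^*)\in A.
\end{equation*}
Here $h$ is the Haar state. Invariance of $h$ together with the corepresentation identities $(\Delta\otimes\id)U=U_{13}U_{23}$ (and likewise for $U'$) should give the intertwining relation
\begin{equation*}
  (1\otimes T_{U'})\,U=U'\,(1\otimes T_{U'}).
\end{equation*}
This is the quantum analogue of $\int g'\,g^{-1}\,dg$; the ordering of legs will need checking, since $h$ is not tracial, and the exact formula may need modular twisting. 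Alternatively, one can use $(h\otimes\id)$ of a suitable product that is made legitimate by the fact that everything lives in $\cat{C}_S\otimes A$, where $h$ is a finite sum.

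Three properties of $T_{U'}$ then follow:
\begin{itemize}
\item $T_U=1$.
\item $U'\mapsto T_{U'}$ is the restriction of a bounded affine, hence analytic, map.
\item For $U'$ close to $U$, $T_{U'}$ is invertible.
\end{itemize}
The unitary part $V_{U'}:=T_{U'}(T_{U'}^*T_{U'})^{-1/2}$ of the polar decomposition is analytic by holomorphic functional calculus. Since both $U$ and $U'$ are unitary, $T^*T$ commutes with $U$, so $V_{U'}$ still intertwines. After inverting, this yields $V_{U'}^*\triangleright U'=U$, which is (4).

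The remaining parts follow from (4):
\begin{itemize}
\item \textbf{(2).} Part (4) shows that the neighborhood $\cU$ is contained in the orbit $\cO_U$, so orbits are open.
\item \textbf{(3).} The stabilizer $U(A)_U=\{V:(1\otimes V)U=U(1\otimes V)\}$ is the unitary group of the commutant-type $C^*$-subalgebra $B_U\subseteq A$ of intertwiners. This is a Banach Lie subgroup, with Lie algebra the skew part of $B_U$, complemented in $A$ by the conditional expectation $x\mapsto(h\otimes\id)\bigl(U^*(1\otimes x)U\bigr)$. The local section $U'\mapsto V_{U'}^*$ then trivializes the orbit map over $\cU$ as $\cU\times U(A)_U$, and translating by $U(A)$ gives the principal fibration.
\item \textbf{(1).} The orbit $\cO_U\cong U(A)/U(A)_U$ is thereby an analytic manifold. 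It is a submanifold of $U(\cC(\bG)\underline{\otimes}A)$ because the orbit map is an immersion with split image, its differential being $X\mapsto[1\otimes X,U]$, whose kernel $\mathfrak{b}_U$ is complemented, and because (4) makes it a topological embedding locally. Openness of the orbits glues these pieces.
\end{itemize}

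I expect the main obstacle to be the intertwining identity for $T_{U'}$ in the genuinely quantum, non-tracial, possibly non-coamenable setting, together with checking that slicing with $h$ is legitimate on $\cC(\bG)\underline{\otimes}A$ (it is, since $h$ is a state, but the leg bookkeeping is delicate). If the naive formula fails, the fallback is to work isotypic component by isotypic component. For each $\alpha\in S$, the close projection tuples $(P^{\alpha}_{U'})$ and $(P^{\alpha}_U)$ are conjugated analytically, as in the proof of \Cref{le:spc.cont}, via the standard $(\sum_\alpha P^{\alpha}_{U'}P^{\alpha}_U)$-polar-decomposition trick. This reduces to the case of a single isotypic type, where $U'$ is determined by a representation of the finite-dimensional algebra $C^{\alpha*}\cong M_{d_\alpha}$ in the corner, and the problem becomes the classical one of close homomorphisms from a finite-dimensional $C^*$-algebra, handled by \cite[Theorem 4.1]{zbMATH07787455}.
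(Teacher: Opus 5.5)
Your argument is sound, and it takes a genuinely different route from the paper. The paper constructs no intertwiners. It uses \Cref{le:spc.cont} to split $\cat{Rep}_{\cat{fs}}(\bG,A)$ into the open pieces $\cat{Rep}_{\subseteq\cF}(\bG,A)$. Via \Cref{le:nmnm} it identifies each piece with the unital $*$-morphisms $\cC(\bG)^*_{\cF}\to A$ out of a \emph{finite-dimensional} $C^*$-algebra. It then imports the manifold structure, local homogeneity, principal fibration and local sections from \cite[Theorems 4.1 and 4.4]{zbMATH07787455}. Finally it transfers the submanifold structure from $\cL(\cC(\bG)^*_{\cF},A)\cong\cC(\bG)_{\cF}\otimes A$, which is complemented in $\cC(\bG)\underline{\otimes}A$, to the unitary group.

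That route is short and runs in parallel with the Banach-space version \Cref{th:fin.spc.ban}, but finite spectrum is essential to it. Your Haar-averaging route is self-contained and produces the explicit section $U'\mapsto |T_{U'}|^{-1}T_{U'}^*$ on the whole ball $\|U'-U\|<1$. It also never uses finite spectrum. Your opening reduction to finite-dimensional coefficients is superfluous, and so is your isotypic fallback (which is close in spirit to the paper's proof). So your argument actually gives the four conclusions on all of $\cat{Rep}(\bG,A)$, with local constancy of spectra as a byproduct, since norm-close representations come out unitarily equivalent.

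There are three points to tighten.

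First, the step you flagged as the main obstacle needs no modular twist. Slice the middle leg of
\begin{equation*}
  (\Delta\otimes\id)(U'U^*)=U'_{13}\,(U'U^*)_{23}\,U^*_{13}
\end{equation*}
with $h$ and use $(\id\otimes h)\Delta=h(\cdot)1$; this gives $1\otimes T_{U'}=U'(1\otimes T_{U'})U^*$ directly. The analogous computation with $(h\otimes\id)\Delta=h(\cdot)1$ shows that your expectation lands in $B_U$.

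Second, you assert but do not prove that the range of $X\mapsto[1\otimes X,U]$ is complemented in the tangent space at $U$ of the ambient unitary group. This follows from the same averaging device. The map $Z\mapsto\bigl[1\otimes(h\otimes\id)(ZU^*),U\bigr]$ is a bounded, $*$-compatible idempotent whose range is exactly that image. The reason is that right invariance makes $(h\otimes\id)\bigl(U(1\otimes X)U^*\bigr)$ commute with $U$.

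Third, order the deductions as follows:
\begin{enumerate}[(i)]
\item show $U(A)_U$ is a split Lie subgroup;
\item form the homogeneous manifold $U(A)/U(A)_U$;
\item check that the injective immersion into the unitary group is a topological embedding, which your section gives since it is the restriction of an ambient analytic map.
\end{enumerate}
Then (1) and (3) come out together, instead of deriving (3) before (1).
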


A point of clarification is in order, concerning an ambiguous term: \emph{submanifolds} $N\subseteq M$ are understood here as in \cite[\S II.2]{lang-fund} (the \emph{sous-vari\'et\'es} of \cite[\S 5.8.1]{bourb_vars_1-7}): the Banach tangent-space embeddings $T_pN\le T_pM$, $p\in N$ are required to split, i.e. be \emph{complemented} \cite[\S 4.9]{nb_tvs}. Or: what \cite[Definition 3.1.10]{2602.12362v1} refers to as \emph{split} submanifolds. 

\begin{proof}
  Embed $A$ into some $\cL(H)$, so that $\bG$-representations \emph{in} $A$ are also unitary $\bG$-representations \emph{on} $H$. Given a finite-spectrum $\bG$-representation on $H$, all others sufficiently close to it will have the same (finite) spectrum by \Cref{le:spc.cont}\Cref{item:le:spc.cont:a}. It follows that \Cref{eq:th:unifmor2amfld:repfs} decomposes into open disjoint subsets
  \begin{equation*}
    \cat{Rep}_{\cF}(\bG,A)
    :=
    \left\{\rho\in \cat{Rep}_{\cat{fs}}(\bG,A)\ |\ \spec(\rho)=\cF\right\}
    ,\quad
    \cF\subseteq \mathrm{Irr}(\bG)\text{ finite}.
  \end{equation*}
  Note next that by \Cref{le:nmnm} below
  \begin{equation}\label{eq:rep.le.f}
    \cat{Rep}_{\subseteq \cF}(\bG,A)
    :=
    \cup_{\cF'\subseteq \cF}\cat{Rep}_{\cF'}(\bG,A)
  \end{equation}
  is topologically identifiable with the space
  \begin{equation*}
    \left\{\text{unital morphisms }\cC(\bG)^*_{\cF}\xrightarrow{\quad}A\right\},
  \end{equation*}
  where $\cC(\bG)^*_{\cF}$ is the $C^*$-algebra dual to the subcoalgebra
  \begin{equation}\label{eq:cfg}
    \cC(\bG)_{\cF}
    :=
    \bigoplus_{\alpha\in \cF}\spn\{u^{\alpha}_{ij}\ |\ 1\le i,j\le \dim\alpha\}
    \subseteq \cC(\bG)
  \end{equation}  
  The conclusion now follows in its entirety from the analogous statement for spaces of representations of finite-dimensional $C^*$-algebras (\cite[Theorem 4.1]{zbMATH07787455} for instance), with one caveat: the individual open subspaces
  \begin{equation*}
    \cat{Rep}_{\subseteq \cF}(\bG,A)
    \subseteq
    \cat{Rep}_{\cat{fs}}(\bG,A)
  \end{equation*}
  are realized as submanifolds of the respective Banach spaces $\cL\left(\cC(\bG)^*_{\cF},A\right)$ rather than the desired ambient $U(\cC(\bG)\underline{\otimes}A)$. The following device will supply the deficiency.

  \begin{itemize}[wide]
  \item The embedding
    \begin{equation*}
      U(\cC(\bG)\underline{\otimes}A)
      \le
      (\cC(\bG)\underline{\otimes}A)^{\times}
      :=
      \left\{\text{invertible elements in the displayed $C^*$-algebra}\right\}
    \end{equation*}
    being a submanifold \cite[Corollary 15.22]{upm_ban} , it suffices to confirm a manifold embedding
    \begin{equation}\label{eq:cfa.times}
      \cat{Rep}_{\subseteq \cF}(\bG,A)
      \subseteq
      (\cC(\bG)\underline{\otimes}A)^{\times}
      ,\quad
      \text{fixed finite }\cF\in 2^{\mathrm{Irr}(\bG)}.
    \end{equation}
  \item Now restrict attention to the (complemented) subspace
    \begin{equation*}
      \cL\left(\cC(\bG)_{\cF}^*,A\right)
      \cong 
      \cC(\bG)_{\cF}\otimes A
      =
      \im \bigoplus_{\alpha\in \cF}\left(P_{\alpha}\in \cL\cC(\bG)\right)
      \le
      \cC(\bG)\underline{\otimes}A
      \quad
      \left(\text{see \Cref{eq:cfg}}\right).
    \end{equation*}

  \item The chain
    \begin{equation*}
      C^*\left(\cC(\bG)_{\cF}^*,A\right)
      \le
      \left(\text{unital-Banach-algebra morphisms $\cC(\bG)_{\cF}^*\to A$}\right)
      \le
      \cL\left(\cC(\bG)_{\cF}^*,A\right)
    \end{equation*}
    is now one of analytic Banach submanifolds by \cite[Theorems 4.1 and 4.4]{zbMATH07787455}.

  \item And finally, the manifold embedding \Cref{eq:cfa.times} follows by intersecting with the \emph{open} submanifold of invertible elements in $\cC(\bG)\underline{\otimes}A$.
  \end{itemize}
\end{proof}

In particular, \Cref{th:isautofinsp,th:unifmor2amfld} jointly produce

\begin{corollary}\label{cor:if.cls.pt}
  The conclusion of \Cref{th:unifmor2amfld} holds for
  \begin{equation*}
    \cat{Rep}(\bG,A)
    =
    \cat{Rep}_{\cat{fs}}(\bG,A)
    \subset
    U(\cC(\bG)\underline{\otimes}A)
  \end{equation*}
  for any unital $C^*$-algebra $A$ and compact quantum group $\bG$ with a classical point.  \qedhere
\end{corollary}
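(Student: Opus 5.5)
The corollary should follow by combining \Cref{th:isautofinsp} with \Cref{th:unifmor2amfld}. The one thing to prove is the set-theoretic equality $\cat{Rep}(\bG,A)=\cat{Rep}_{\cat{fs}}(\bG,A)$ whenever $\bG$ has a classical point. Once that holds, all four conclusions of \Cref{th:unifmor2amfld} transfer verbatim: submanifold structure, local homogeneity, principal fibration, and local analytic splitting. In particular $\cat{Rep}_{\cat{fs}}$, being open by \Cref{le:spc.cont}, is then all of $\cat{Rep}(\bG,A)$.

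To get the equality I fix $U\in\cat{Rep}(\bG,A)$. By definition $U$ lies in $U(\cC(\bG)\underline{\otimes}A)$, not merely in a multiplier algebra. I choose a faithful unital embedding $\iota\colon A\hookrightarrow\cL(H)$. Then
\begin{equation*}
  \wt U=(\id\otimes\iota)U\in U\left(\cC(\bG)\underline{\otimes}\cL(H)\right),
\end{equation*}
because the spatial tensor product is functorial for injective $*$-morphisms: $\id\otimes\iota$ embeds $\cC(\bG)\underline{\otimes}A$ isometrically into $\cC(\bG)\underline{\otimes}\cL(H)$. Moreover $\wt U$ still satisfies $(\Delta\otimes\id)\wt U=\wt U_{13}\wt U_{23}$, and it sits inside $M(\cC(\bG)\underline{\otimes}\cK(H))$, so it is a unitary representation in the sense of \Cref{eq:rep.gh}. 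It is uniformly continuous in the sense of \Cref{th:equivunif}\Cref{item:th:equivunif:unif}. Since $\bG$ has a classical point, \Cref{th:equivunif} then gives that $\wt U$ has finitely many isotypic components. By \Cref{def:spec.rep.g.a}, $\spec U=\spec\wt U$ is finite; equivalently this is criterion (b) of \Cref{th:isautofinsp}. Hence $U\in\cat{Rep}_{\cat{fs}}(\bG,A)$, and the reverse inclusion is trivial.

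The only step needing any care is checking that $\wt U$ qualifies as a representation to which \Cref{th:equivunif} applies. This means confirming that $\cC(\bG)\underline{\otimes}\cL(H)$ sits inside $M(\cC(\bG)\underline{\otimes}\cK(H))$ compatibly with the comultiplication. That holds because $\cL(H)=M(\cK(H))$, and the minimal tensor product of a $C^*$-algebra with a multiplier algebra embeds in the multiplier algebra of the tensor product. After this, the corollary is immediate from \Cref{th:unifmor2amfld} applied to $\cat{Rep}_{\cat{fs}}(\bG,A)=\cat{Rep}(\bG,A)$.
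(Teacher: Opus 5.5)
Your proposal is correct and follows the paper's own argument. The paper obtains the corollary by combining \Cref{th:isautofinsp} with \Cref{th:unifmor2amfld}: when $\bG$ has a classical point, every representation has finite spectrum, and that theorem itself comes from \Cref{th:equivunif} via a faithful embedding $A\hookrightarrow\cL(H)$, exactly as you argue. Your check that $\wt U$ is a genuine representation in the sense of \Cref{eq:rep.gh} simply spells out what the paper leaves implicit.
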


We remind the reader that for compact quantum $\bG$ and a unital $C^*$-algebra $A$ (and in fact in much broader generality \cite[Propositions 5.2 and 5.3]{kus-univ}) there is a bijection
\begin{equation}\label{eq:can}
  \cat{Mor}\left(c_0(\widehat{\bG}),A\right)
  \xrightarrow[\quad\cong\quad]{\quad\cat{can}=\cat{can}_{\bG,A}\quad}
  \cat{Rep}(\bG,A),
\end{equation}
where
\begin{itemize}[wide]
\item $\cat{Mor}(B,B')$ denotes the space of \emph{non-degenerate} $C^*$ morphisms $B\xrightarrow{\psi} M(B')$ for possibly non-unital $B,B'$, i.e. \cite[p.291]{kus-univ} with $\overline{\spn \psi(B)B'}=B'$;

\item and
  \begin{equation*}
    c_0(\widehat{\bG})
    :=
    \left\{(x_{\alpha})_{\alpha\in \mathrm{Irr}(\bG)}\in \prod_{\mathrm{Irr}(\bG)}C^{\alpha *}\ :\ \lim_{\substack{\alpha\text{ away}\\\text{from finite sets}}}\|x_{\alpha}\|=0\right\}
  \end{equation*}
\end{itemize}
is the $C^*$-algebra of functions vanishing at infinity on the (discrete quantum) \emph{Pontryagin dual} \cite[\S 2.3]{tim} of $\bG$. 

\begin{lemma}\label{le:nmnm}
  For compact quantum $\bG$ the bijection \Cref{eq:can} induces homeomorphisms
  \begin{equation*}
    \forall\left(\text{finite }\cF\subseteq \mathrm{Irr}(\bG)\right)
    \ :\ 
    \cat{Mor}\left(\prod_{\alpha\in \cF}C^{\alpha*},A\right)
    \xrightarrow[\quad\cong\quad]{\quad\cat{can}=\cat{can}_{\bG,A}\quad}
    \cat{Rep}_{\subseteq \cF}(\bG,A)
  \end{equation*}
  for the norm topologies on the (co)domain.
\end{lemma}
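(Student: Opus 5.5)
The plan is to make the bijection \Cref{eq:can} explicit on the finite-spectrum part and read off both continuity directions from the formulas. Write $B_{\cF}:=\prod_{\alpha\in \cF}C^{\alpha*}$; this is a finite-dimensional $C^*$-algebra and a quotient of $c_0(\widehat{\bG})$ via the central projection $p_{\cF}:=\sum_{\alpha\in\cF}1_{\alpha}$. Because $B_{\cF}$ is unital, a non-degenerate morphism $B_{\cF}\to M(A)=A$ is just a unital $*$-morphism, and I identify $\cat{Mor}(B_{\cF},A)$ with those morphisms $\phi\in\cat{Mor}(c_0(\widehat{\bG}),A)$ that factor through $B_{\cF}$, i.e.\ with $\phi(p_{\cF})=1$. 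Recall that $\cat{can}(\phi)=(\id\otimes\phi)\mathbb{W}$ for the multiplicative unitary $\mathbb{W}=\bigoplus_{\alpha}u^{\alpha}\in M(\cC(\bG)\underline{\otimes}c_0(\widehat{\bG}))$. For $\phi$ factoring through $B_{\cF}$ this gives the finite sum
\begin{equation*}
  \cat{can}(\phi)=\sum_{\alpha\in\cF}\sum_{i,j}u^{\alpha}_{ij}\otimes\phi(e^{\alpha}_{ij})\in \cC(\bG)_{\cF}\otimes A\subseteq \cC(\bG)\underline{\otimes}A,
\end{equation*}
where the $e^{\alpha}_{ij}$ are matrix units of $C^{\alpha*}$ dual to the $u^{\alpha}_{ij}$.

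First I check that the image is $\cat{Rep}_{\subseteq\cF}(\bG,A)$. For $\beta\notin\cF$, the spectral projection \Cref{eq:spc.proj} of $\wt U$ is $\phi\iota(1_{\beta})$ up to the conventions (star, antipode); since $1_{\beta}p_{\cF}=0$ and $\phi(p_{\cF})=1$, it vanishes, so $\spec\subseteq\cF$. Conversely, let $U\in\cat{Rep}(\bG,A)$ with $\spec U\subseteq\cF$ and $U=\cat{can}(\phi)$. Then $\phi(1_{\beta})=P^{\beta}_U=0$ for every $\beta\notin\cF$, and by nondegeneracy $\sum_{\alpha}\phi(1_\alpha)=1$ strictly, so $\phi(p_{\cF})=1$ and $\phi$ factors through $B_{\cF}$. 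Hence $\cat{can}$ restricts to a bijection onto $\cat{Rep}_{\subseteq\cF}(\bG,A)$.

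For continuity, the displayed formula exhibits $\cat{can}$ on $\cat{Mor}(B_{\cF},A)$ as the restriction of a bounded linear map $\cL(B_{\cF},A)\to\cC(\bG)\underline{\otimes}A$. The source is finite-dimensional, so every norm on it is equivalent, and this map is norm-continuous. For the inverse, recover $\phi$ from $U$ by slicing: $\phi(e^{\alpha}_{ij})=(\omega^{\alpha}_{ij}\otimes\id)U$, where $\omega^{\alpha}_{ij}\in\cO(\bG)^*$ is the functional picking out the coefficient of $u^{\alpha}_{ij}$. The main obstacle is that these slicing functionals must be bounded on $\cC(\bG)$ and not merely on $\cO(\bG)$. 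I would handle this with Haar-state slicing, $\omega^{\alpha}_{ij}=h_{\bG}\bigl((\text{suitable matrix coefficient})^*\,\cdot\,\bigr)$ via the Peter--Weyl/Woronowicz orthogonality relations with the $F$-matrices. These are bounded states-times-elements on any completion $\cC(\bG)$, so $U\mapsto(\omega\otimes\id)U$ is norm-continuous with norm at most $\|\omega\|$. Since only finitely many $(\alpha,i,j)$ with $\alpha\in\cF$ are involved, the inverse map $U\mapsto\phi$ is continuous into $\cL(B_{\cF},A)$, which completes the homeomorphism.
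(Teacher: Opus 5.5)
Your proposal is correct and follows the paper's proof. Both write $\cat{can}$ and $\cat{can}^{-1}$ explicitly as $\phi\mapsto(\id\otimes\phi)W$ and $U\mapsto\bigl(\varphi\mapsto(\varphi\otimes\id)U\bigr)$, with $\varphi$ ranging over the finite-dimensional $\prod_{\alpha\in\cF}C^{\alpha*}\subset c_{00}(\widehat{\bG})$ viewed as bounded functionals on $\cC(\bG)$, and read off norm continuity from these formulas. You also spell out two points the paper leaves implicit: that the image of $\cat{Mor}(\prod_{\alpha\in\cF}C^{\alpha*},A)$ is exactly $\cat{Rep}_{\subseteq\cF}(\bG,A)$, and that the slicing functionals are bounded via the Haar state. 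One small correction: it is $\prod_{\alpha\in\cF}C^{\alpha*}$, not $\cL(\prod_{\alpha\in\cF}C^{\alpha*},A)$, that is finite-dimensional; the forward map is bounded simply because it is a finite sum of bounded maps $\phi\mapsto u^{\alpha}_{ij}\otimes\phi(e^{\alpha}_{ij})$.
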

\begin{proof}
  This will be immediate once the explicit constructions for $\cat{can}^{\pm 1}$ are recalled. In one direction,
  \begin{equation*}
    \cat{Mor}\left(c_0(\widehat{\bG}),A\right)
    \ni
    \psi
    \xmapsto{\quad\cat{can}\quad}
    U_{\psi}
    :=
    (\id\otimes \psi)W
    \in
    \cat{Rep}(\bG,A)
  \end{equation*}
  (indeed norm-norm continuous) for the canonical representation \cite[Proposition 5.1]{kus-univ}
  \begin{equation*}
    W
    \in
    \cat{Rep}(\bG,c_0(\widehat{\bG}))
    \subseteq
    UM\left(\cC_r(\bG)\underline{\otimes}c_0(\widehat{\bG})\right).
  \end{equation*}
  Backwards,
  \begin{equation*}
    \cat{Rep}(\bG,A)
    \ni
    U
    \xmapsto{\quad\cat{can}^{-1}\quad}
    \psi_U
    :=
    \left(c_{00}(\widehat{\bG})\ni \varphi\xmapsto{\quad}(\varphi\otimes\id)U\right)
    \in
    \cat{Mor}\left(c_0(\widehat{\bG}),A\right)
  \end{equation*}
  with
  \begin{equation*}
    c_{00}(\widehat{\bG})
    :=
    \bigoplus_{\alpha\in \mathrm{Irr}(\bG)}C^{\alpha *}
    \underset{\text{dense ideal}}{\trianglelefteq}
    c_{0}(\widehat{\bG})
  \end{equation*}
  regarded as a space of functionals on $\cC_r(\bG)$. The conclusion follows upon restricting to the finite-dimensional $\prod_{\alpha\in \cF}C^{\alpha*}$, contained in $c_{00}(\widehat{\bG})$.
\end{proof}

Recall also the discussion in \cite[\S 3.1]{Chirvasitu2026JNCG604} of $\bG$-representations (or actions) on Banach spaces $E$ generally (as opposed to unitary representations on Hilbert spaces). Such a gadget is cast in \cite[Definition 3.1]{Chirvasitu2026JNCG604} as
\begin{itemize}[wide]
\item a continuous linear
  \begin{equation}\label{eq:rep.on.e}
    E
    \xrightarrow{\quad\rho\quad}
    \cC(\bG)\otimes_{\varepsilon}E
    \quad
    \left(\text{\emph{injective} Banach-space tensor product \cite[p.43]{dfs_tens-1}}\right),
  \end{equation}
  coassociative in the sense that $(\Delta\otimes \id)\rho=(\id\otimes \rho)\rho$;
  
\item with
  \begin{equation*}
    \overline{
      \left\{
        (x\otimes \id)\rho v
        \ :\
        x\in \cC(\bG),\ v\in E
      \right\}
    }
    =
    \cC(\bG)\otimes_{\varepsilon}E. 
  \end{equation*}
\end{itemize}
The spectral machinery goes through for Banach space representations, much as in the argument underpinning the more familiar context \cite[Theorem 1.5]{podl_symm} of $\bG$-actions on $C^*$-algebras: there are idempotents
\begin{equation*}
  P^{\alpha}:=\left(\psi^{\alpha}\otimes\id\right)\rho
  \in \cL(E)
\end{equation*}
cutting out isotypic components $E^{\alpha}$, one then defines the spectrum $\spec\rho$ in the obvious fashion, etc. 

\Cref{th:unifmor2amfld} now has its point-by-point Banach-space-representation counterpart in the following guise. 

\begin{theorem}\label{th:fin.spc.ban}
  Let $\bG$ be a compact quantum group and $E$ a Banach space.
  \begin{enumerate}[(1),wide]
  \item The finite-spectrum representations \Cref{eq:rep.on.e}, in their norm topology, constitute a real analytic Banach submanifold
    \begin{equation*}
      \cat{Rep}_{\cat{fs}}(\bG,E)
      \subset
      \cL(E,\cC(\bG)\otimes_{\varepsilon}E).
    \end{equation*}

  \item That manifold is locally homogeneous under the conjugation action of the general linear group $GL(E)$, in the sense that the orbits of that action are open in $\cat{Rep}_{\cat{fs}}$.

  \item For every $GL(E)$-orbit $\cO_{\rho}$ in $\cat{Rep}_{\cat{fs}}(\bG,E)$ the orbit map
    \begin{equation*}
      GL(E)\ni V
      \xmapsto{\quad}
      V\triangleright \rho := (1\otimes V)\rho(1\otimes V^*)
      \in \cO_{\rho}
    \end{equation*}
    is an analytic principal fibration.
    
  \item In particular, for every $\rho\in \cat{Rep}_{\cat{fs}}(\bG,E)$ there is an open neighborhood $\cU\ni \rho$ and an analytic map
    \begin{equation*}
      \cU\ni \rho'
      \xmapsto{\quad}
      V_{\rho'}\in GL(E)
      ,\quad
      V_{\rho}=1
      ,\quad
      V_{\rho'}\triangleright \rho'=\rho. 
    \end{equation*}
  \end{enumerate}
\end{theorem}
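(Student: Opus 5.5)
The plan is to mirror the proof of \Cref{th:unifmor2amfld}: reduce finite-spectrum Banach-space representations to unital Banach-algebra morphisms out of the finite-dimensional semisimple algebra $\cC(\bG)^*_{\cF}\cong\prod_{\alpha\in\cF}C^{\alpha*}$, and then invoke the known manifold structure on such morphism spaces (\cite[Theorems 4.1 and 4.4]{zbMATH07787455}).

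\textbf{Step 1: the spectrum is locally constant.} First I would establish the analogue of \Cref{le:spc.cont}. Each $\psi^{\alpha}$ is a bounded functional on the finite-dimensional coalgebra $C^{\alpha}$, and $\rho\mapsto P^{\alpha}_{\rho}=(\psi^{\alpha}\otimes\id)\rho$ is norm-norm continuous from $\cL(E,\cC(\bG)\otimes_{\varepsilon}E)$ to $\cL(E)$. The $P^{\alpha}_{\rho}$ are idempotents rather than projections. Sufficiently close idempotents are similar, via the standard $V=PQ+(1-P)(1-Q)$ trick, so they vanish or not simultaneously. The same holds for orthogonal finite families of idempotents $(P^{\alpha})_{\alpha\in\cF}$ summing to $1$, i.e.\ unital morphisms $\bC^{|\cF|}\to\cL(E)$, again by \cite[Theorem 4.1]{zbMATH07787455} in its Banach-algebra version. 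It follows that $\rho'$ near a finite-spectrum $\rho$ has $\sum_{\alpha\in\spec\rho}P^{\alpha}_{\rho'}=1$, hence $\spec\rho'=\spec\rho$. Consequently $\cat{Rep}_{\cat{fs}}(\bG,E)$ is open in $\cat{Rep}(\bG,E)$ and splits into open pieces $\cat{Rep}_{\subseteq\cF}(\bG,E)$.

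\textbf{Step 2: identify each piece with a space of algebra morphisms.} This is the Banach-space analogue of \Cref{le:nmnm}. If $\spec\rho\subseteq\cF$, then $E=\bigoplus_{\alpha\in\cF}E^{\alpha}$ and $\rho$ takes values in $\cC(\bG)_{\cF}\otimes E\cong\cL(\cC(\bG)^*_{\cF},E)$, a finite sum. Coassociativity of $\rho$ is then exactly multiplicativity of
\begin{equation*}
  \pi_{\rho}:\cC(\bG)^*_{\cF}\to\cL(E),\qquad \varphi\mapsto(\varphi\otimes\id)\rho .
\end{equation*}
The density (nondegeneracy) axiom amounts to unitality of $\pi_{\rho}$, using $\sum_{\alpha}P^{\alpha}=1$, i.e.\ the counit restricted to $\cC(\bG)_{\cF}$. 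Conversely, every unital morphism $\pi$ yields $\rho_{\pi}=\sum_{\alpha,i,j}u^{\alpha}_{ij}\otimes\pi(e^{\alpha}_{ij})$ for the dual basis $e^{\alpha}_{ij}$. Both assignments are linear and bounded in norm because $\cF$ is finite, so this gives a homeomorphism, indeed a restriction of a linear isomorphism
\begin{equation*}
  \cL(E,\cC(\bG)_{\cF}\otimes E)\cong\cL\left(\cC(\bG)^*_{\cF},\cL(E)\right)
\end{equation*}
to the relevant subsets. One point needs care here: the definitions of $\spec$ and $P^{\alpha}$ are formulated via $\cC(\bG)\otimes_{\varepsilon}E$. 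I would verify that $\rho$ lands in the finite-rank part by applying $(\psi\otimes\id)$-type arguments to the injective tensor product, using that $\cC(\bG)_{\cF}$ is complemented in $\cC(\bG)$ via the Haar-state projections $P_{\alpha}$.

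\textbf{Step 3: transport the structure, and the expected obstacle.} By \cite[Theorems 4.1 and 4.4]{zbMATH07787455}, unital morphisms from the finite-dimensional semisimple algebra $\cC(\bG)^*_{\cF}$ into the unital Banach algebra $\cL(E)$ form an analytic split submanifold of $\cL(\cC(\bG)^*_{\cF},\cL(E))$. That result also gives openness of the $GL(E)$-conjugation orbits, the principal-fibration property of the orbit maps, and local analytic sections. Conjugation $V\triangleright\rho$ corresponds to $\pi\mapsto V\pi V^{-1}$, so items (2)--(4) transfer directly. For item (1) I still need the ambient $\cL(E,\cC(\bG)\otimes_{\varepsilon}E)$, and I expect this to be the main obstacle. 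Here $\cL(E,\cC(\bG)_{\cF}\otimes E)$ is a closed subspace, complemented by $\bigoplus_{\alpha\in\cF}P_{\alpha}\otimes\id$, which is bounded on injective tensor products. A split submanifold of a complemented subspace is a split submanifold of the whole space. This replaces the invertible-elements device used in the proof of \Cref{th:unifmor2amfld}, and is actually simpler, since no unitarity constraint appears.
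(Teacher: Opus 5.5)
Your proposal is correct and follows essentially the same route as the paper. Both arguments use local constancy of the spectrum via similarity of nearby idempotent tuples (\Cref{le:spc.cont.ban}), identify $\cat{Rep}_{\subseteq\cF}(\bG,E)$ with unital Banach-algebra morphisms $\cC(\bG)^*_{\cF}\to\cL(E)$, appeal to \cite[Theorem 4.4]{zbMATH07787455}, and use the split linear embedding $\cL(E,\cC(\bG)_{\cF}\otimes E)\le\cL(E,\cC(\bG)\otimes_{\varepsilon}E)$ to reach the stated ambient space.

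One small slip in Step 1: the tuple $(P^{\alpha}_{\rho'})_{\alpha\in\cF}$ is not known in advance to sum to $1$, so restricting to unital morphisms is circular. Either compare possibly non-unital morphisms $\bC^{|\cF|}\to\cL(E)$, or simply observe that $1-\sum_{\alpha\in\cF}P^{\alpha}_{\rho'}$ is an idempotent of norm $<1$, hence zero.
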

\begin{proof}
  Much of the scaffolding of \Cref{th:unifmor2amfld}'s proof can stay in place, this time shifting reliance to the Banach analogue of the $C^*$-flavored \cite[Theorem 4.1]{zbMATH07787455}. 

  $\bG$-representations on $E$ sufficiently close to $\rho\in \cat{Rep}_{\cat{fs}}(\bG,E)$ will again have spectra equal to $\cF:=\spec\rho$ for reasons entirely parallel to those invoked in the proof of \Cref{th:unifmor2amfld} (per \Cref{le:spc.cont.ban}), so it suffices to focus attention on
  \begin{equation*}
    \cat{Rep}_{\subseteq \cF}(\bG,E)
    \subseteq
    \cat{Rep}_{\cat{fs}}(\bG,E)
  \end{equation*}
  defined by analogy with \Cref{eq:rep.le.f}. Now identify
  \begin{equation*}
    \begin{aligned}
      \cat{Rep}_{\subseteq \cF}(\bG,E)
      &\cong
        \left\{\text{$\cC(\bG)_{\cF}$-comodule structures on $E$}\right\}\\
      &\cong
        \left\{\text{$\cC(\bG)^*_{\cF}$-module structures on $E$}\right\}
        \quad\left(\text{\cite[Lemma 1.6.4]{mont}}\right)\\
      &\cong
        \cat{BAlg}_1(\cC(\bG)^*_{\cF}, \cL(E))
        :=
        \left\{\text{unital Banach-algebra morphisms }\cC(\bG)^*_{\cF}\to \cL(E)\right\}\\
      &\subseteq \cL\left(\cC(\bG)^*_{\cF}, \cL(E)\right)
        \quad
        \left(\text{Banach submanifold: \cite[Theorem 4.4]{zbMATH07787455}}\right)\\
      &\cong
        \cL\left(E,\cC(\bG)_{\cF}\otimes E\right)\\
      &\cong
        \cL\left(E,\cC(\bG)\otimes_{\varepsilon} E\right)
      \quad\left(\text{split linear, hence submanifold}\right).
    \end{aligned}    
  \end{equation*}
  The claims again all follow from their analogues \cite[Theorem 4.4]{zbMATH07787455} for submanifolds $\cat{BAlg}_1(A,B)\le \cL(A,B)$ for unital Banach algebras $A,B$ with $A$ finite-dimensional semisimple (as $\cC(\bG)^*_{\cF}$ certainly is). 
\end{proof}

\begin{lemma}\label{le:spc.cont.ban}
  Let $\bG$ be a compact quantum group and $E$ a Banach space. 
  \begin{enumerate}[(1),wide]
  \item\label{item:le:spc.cont.ban:all} The map
    \begin{equation*}
      \cat{Rep}(\bG,E)
      \ni \rho
      \xmapsto{\quad}
      \spec \rho
      \in 2^{\mathrm{Irr}(\bG)}=\{0,1\}^{\mathrm{Irr}(\bG)}.
    \end{equation*}
    is continuous for the uniform topology on the domain and the compact-open topology on the codomain.

  \item\label{item:le:spc.cont.ban:fin} In particular, that map's restriction to the open subset
    \begin{equation*}
      \cat{Rep}_{\cat{fs}}(\bG,E)
      \subseteq
      \cat{Rep}(\bG,E)
    \end{equation*}
    is locally constant.
  \end{enumerate}
\end{lemma}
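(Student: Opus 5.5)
The plan is to mirror the proof of \Cref{le:spc.cont}, replacing orthogonal projections by idempotents and the unitary group $U(A)$ by $GL(E)$.

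For \Cref{item:le:spc.cont.ban:all}, fix $\alpha\in\mathrm{Irr}(\bG)$. The map $\rho\mapsto P^{\alpha}_\rho=(\psi^{\alpha}\otimes\id)\rho\in\cL(E)$ is norm-norm continuous. To see this, first note that $\psi^{\alpha}$ vanishes on all $C^{\beta}$ with $\beta\ne\alpha$, so it extends to a bounded functional on $\cC(\bG)$. Concretely, it is the composition of the bounded projection $\cC(\bG)\to C^{\alpha}$ with a functional on the finite-dimensional $C^{\alpha}$; that projection can be realized by convolving against the Haar state with the character of $\alpha$. Then $\psi^{\alpha}\otimes\id$ is bounded $\cC(\bG)\otimes_{\varepsilon}E\to E$ with norm at most $\|\psi^{\alpha}\|$, because slicing the injective tensor product by a bounded functional is contractive up to that norm. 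Hence $\|P^{\alpha}_{\rho}-P^{\alpha}_{\rho'}\|\le\|\psi^{\alpha}\|\,\|\rho-\rho'\|$.

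Next, two idempotents $P,Q$ in a Banach algebra with $\|P-Q\|<1/\|2P-1\|$ are similar. One route is the standard intertwiner $S=QP+(1-Q)(1-P)$, which is invertible when $Q$ is close to $P$ and satisfies $SP=QS$. Alternatively one can quote \cite[Theorem 4.4]{zbMATH07787455} applied to $\bC^2\to\cL(E)$. In either case $P^{\alpha}_{\rho'}=0$ if and only if $P^{\alpha}_{\rho}=0$ for $\rho'$ near $\rho$, which gives continuity of each coordinate $\rho\mapsto[\alpha\in\spec\rho]$. That is exactly continuity into the product topology $\{0,1\}^{\mathrm{Irr}(\bG)}$.

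For \Cref{item:le:spc.cont.ban:fin}, take $\rho$ with finite $\cF:=\spec\rho$. First I need that $\sum_{\alpha\in\cF}P^{\alpha}_\rho=1$ and that the $P^{\alpha}_{\rho}$ are mutually orthogonal idempotents. This should follow from the Podleś-type spectral machinery recalled above: for $v\in E$, the vector $\rho v$ lies in the closure of the algebraic tensor product, and a finite-spectrum representation has $E=\bigoplus_{\alpha\in\cF}E^{\alpha}$ because the isotypic components span densely and the sum of finitely many is closed. The orthogonality relations $P^{\alpha}P^{\beta}=\delta_{\alpha\beta}P^{\alpha}$ hold for all $\rho$, from coassociativity and $\psi^{\alpha}*\psi^{\beta}=\delta_{\alpha\beta}\psi^{\alpha}$.

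For $\rho'$ close to $\rho$, the tuple $(P^{\alpha}_{\rho'})_{\alpha\in\cF}$ is a unital-Banach-algebra morphism $\bC^{\cF}\to\cL(E)$ up to unitality: the orthogonality holds, but the sum $Q':=\sum_{\alpha\in\cF}P^{\alpha}_{\rho'}$ is a priori only an idempotent close to $1$. An idempotent $Q'$ with $\|Q'-1\|<1$ is invertible, hence equals $1$. So $\sum_{\alpha\in\cF}P^{\alpha}_{\rho'}=1$, and every other $P^{\beta}_{\rho'}$ with $\beta\notin\cF$ equals $P^{\beta}_{\rho'}Q'=0$. Combined with part (1), which shows that each $\alpha\in\cF$ survives, this yields $\spec\rho'=\cF$. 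Openness of $\cat{Rep}_{\cat{fs}}$ and local constancy follow together.

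The main obstacle is the step asserting $\sum_{\alpha\in\spec\rho}P^{\alpha}_\rho=1$ for a finite-spectrum Banach representation. This requires the density part of the definition (the Podleś condition), and for non-Hilbert $E$ it cannot be obtained via orthogonal complements. I would try to prove it by showing that the closed span of $\bigcup_{\alpha}E^{\alpha}$ is all of $E$, arguing as in \cite[Theorem 1.5]{podl_symm}. Approximate $\rho v$ by elements of $\cO(\bG)\otimes E$ and apply a bounded functional recovering $v$. In the absence of a counit, one instead uses the density condition to write $v$ as a limit of vectors $(\varphi\otimes\id)\rho w$ with $\varphi$ in the dual $c_{00}$, each of which lies in finitely many isotypic components.
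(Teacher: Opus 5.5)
Your proposal is correct and follows the paper's argument: norm continuity of $\rho\mapsto P^{\alpha}_{\rho}$, similarity of nearby idempotents, and transfer of the identity $\sum_{\alpha\in\cF}P^{\alpha}=1$ from $\rho$ to nearby $\rho'$. The differences are minor. You replace the paper's appeal to $GL(E)$-conjugacy of nearby idempotent tuples with the explicit intertwiner $QP+(1-Q)(1-P)$ and the observation that an idempotent within distance $1$ of the identity equals the identity. You also spell out the orthogonality relations and the Podle\'s-density argument for $\sum_{\alpha\in\spec\rho}P^{\alpha}_{\rho}=1$, which the paper leaves implicit.
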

\begin{proof}
  The argument runs parallel to its Hilbert-space antecedent: for each $\alpha\in \mathrm{Irr}(\bG)$ the map
  \begin{equation*}
    \cat{Rep}(\bG,E)
    \ni \rho
    \xmapsto{\quad}
    P_{\rho}^{\alpha}=\left(\psi^{\alpha}\otimes\id\right)\rho
    \in
    \cL(E)
  \end{equation*}
  is norm-norm continuous, so for finite $\cF\in 2^{\mathrm{Irr}(\bG)}$ sufficiently close tuples $\left(P^{\alpha}_{\rho}\right)_{\alpha\in \cF}$ and $\left(P^{\alpha}_{\rho'}\right)_{\alpha\in \cF}$ will be mutual $GL(E)$ conjugates (\cite[Theorem 8.2.3]{rnd_amnbl_2002} or \cite[Theorem 4.4]{zbMATH07787455}). In particular, one such tuple will sum to $1\in \cL(E)$ if and only if the other does. 
\end{proof}


\addcontentsline{toc}{section}{References}



\Addresses

\end{document}